\newtheorem{theorem}{Theorem}[section]
\newtheorem{lemma}[theorem]{Lemma}
\theoremstyle{definition}
\newtheorem{proposition}[theorem]{Proposition}
\theoremstyle{remark}
\numberwithin{equation}{section} \theoremstyle{plain}
\newtheorem{corollary}{Corollary}
\def\D{\mathbb D}
\def\C{\mathbb C}
\def\F{\mathbb F}
\def\H{\mathbb H}
\def\J{\mathbb J}
\def\R{\mathbb R}
\def\U{\mathbb U}
\def\V{\mathbb V}
\def\W{\mathbb W}
\def\m{\mathcal M}
\def \o{\ddot{\hbox{o}}}
\def \x{{\bf x}}
\begin{document}

\copyrightinfo{2001}{enter name of copyright holder}
\keywords{Hyperbolic space, isometry group, dynamical types, z-classes}
\newcommand{\secref}[1]{section~\ref{#1}}
\newcommand{\thmref}[1]{Theorem~\ref{#1}}
\newcommand{\lemref}[1]{Lemma~\ref{#1}}
\newcommand{\rmkref}[1]{Remark~\ref{#1}}
\newcommand{\Propref}[1]{Proposition~\ref{#1}}
\newcommand{\Corref}[1]{Corollary~\ref{#1}}
\newcommand{\eqnref}[1]{~{\textrm(\ref{#1})}}

\title
{$z$-Classes of Isometries of the Hyperbolic Space \footnote{ \it to appear in Conf. Geom. Dyn.}}
\author
{Krishnendu Gongopadhyay \and Ravi S. Kulkarni}
\address
{Indian Institute of Technology (Bombay), Powai, Mumbai 400076, India}

\address{{\it Current Address:} School of Mathematics, Tata Institute of Fundamental Research, Colaba, Mumbai 400005, India.}

\address{Indian Institute of Technology (Bombay), Powai, Mumbai 400076, India,
\and Queens College and Graduate Center, City University of New York.}
\email {krishnendug@gmail.com}
\email{punekulk@yahoo.com}
\date{January 16, 2009}
%\subjclass
\footnote{{\it Mathematics Subject Classification(2000). \hspace{.1in}}{Primary 51M10; Secondary 51F25}}

%\endsubjclass

\begin{abstract} Let $G$ be a group. Two elements $x, y$ are said to be {\it $z$-equivalent} if their centralizers are conjugate in $G$.
The class equation of $G$ is the partition of $G$ into conjugacy classes. Further decomposition of  conjugacy classes into
$z$-classes provides an important information about the internal structure of the group, cf. \cite{K} for the elaboration of this theme.

Let $I(\H^n)$ denote the group of isometries of the hyperbolic $n$-space, and let $I_o(\H^n)$ be the identity component of $I(\H^n)$.  We show that the number of $z$-classes in $I(\H^n)$ is finite.
 We actually compute their number, cf.  theorem 1.3. We interpret the finiteness of $z$-classes as accounting for the finiteness of  ``dynamical types" in $I(\H^n)$.
 Along the way we also parametrize conjugacy classes.
We mainly use the linear model of the hyperbolic space for this purpose. This description of parametrizing conjugacy classes appears to be new,
 cf. \cite{CG}, \cite{K2} for previous attempts. Ahlfors \cite{ahlfors} suggested the use of Clifford algebras
to deal with higher dimensional hyperbolic geometry, cf \cite{ahlfors2}, \cite{cw}, \cite{wada}, \cite{waterman}.  These works may be
compared to the approach suggested in this paper.

In dimensions $2$ and $3$, by remarkable Lie-theoretic isomorphisms, $I_o(\H^2)$ and  $I_o(\H^3)$ can be lifted to $GL_o(2, \R)$, and $GL(2, \C)$ respectively. For orientation-reversing isometries there are some modifications of these liftings. Using these liftings,
in the appendix A, we have introduced a single numerical invariant $c(A)$,  to classify the elements of $I(\H^2)$ and  $I(\H^3)$,  and explained the classical terminology.

Using the ``Iwasawa decomposition" of $I_o(\H^n)$, it is possible to equip $\H^n$ with a group structure. In the appendix B, we visualize the stratification of the group $\H^n$ into its conjugacy and $z$-classes.
\end{abstract}

\maketitle

\section{Introduction}
Let $\H^n$ denote the $n$-dimensional hyperbolic space, and $I(\H^n)$ its full isometry group. In the disk-model of the hyperbolic space, an isometry is said
to be {\it elliptic} if it has a fixed point in the space. A non-elliptic isometry is called {\it hyperbolic,} resp. { \it parabolic} if it has $2$, resp.
 $1$ fixed points on the ideal conformal boundary of the hyperbolic space.  We shall build on this classification, and obtain a finer classification up to conjugacy,
 and beyond. A remarkable feature of $\H^n$  is that the group $I(\H^n)$ is infinite. But the ``dynamical types" that our minds can perceive are just finite in number.
 Can we account for this fact in terms of the internal structure of the group alone?

 Let   $I(\H^n)$ act on itself by conjugacy. For $x$ in $I(\H^n)$, let $Z(x)$ denote the centralizer of $x$ in $I(\H^n).$
For $x, y$ in $I(\H^n)$ we shall say $x \sim y$ if $Z(x)$ and $Z(y)$ are conjugate in
$I(\H^n)$. We call the equivalence class defined by this relation, the
{\emph{$z$-class}} of $x$. It turns out that the number of $z$-classes is finite, and
this fact is interpreted as accounting for  the finiteness of dynamical types. The $z$-classes are pairwise disjoint, and are manifolds,
cf. theorem 2.1 \cite{K}, and so give a stratification of $I(\H^n)$ into finitely many strata.   Each stratum has two canonical fibrations, and
they explain the ``spatial" and ``numerical" invariants which characterize the transformation. The $I(\H^n)$ provides a significant example of the
philosophy that was suggested in \cite{K}.

In this paper we  work mostly in the linear model. Let $\V$ be a real vector space of dimension $n+1$ equipped with a quadratic form of signature $(1, n)$, and $O(Q)$
its full group of isometries. This group has four components. The hyperboloid
$\{v\in \V\;|\; Q(v) = 1\}$ has two components. One of the components is the model for $\H^n$. Suggestively, let $I(Q)$ denote the group  which preserves the component, and
we identify it with $I(\H^n)$. Now $O(Q)$ is a linear algebraic group. A significant property of a linear algebraic group over characteristic zero is that each  $T$  in a linear algebraic group has the  Jordan decomposition $T = T_sT_u$ where $T_s$ is semisimple, (that is, every   $T_s$-invariant subspace has a $T_s$-invariant complement), and $T_u$
is unipotent, (that is, all eigenvalues of $T_u$ are $1$) cf. \cite{hum}. Then $T_s$ and $T_u$ are also in $O(Q)$. $T_s$ and $T_u$ commute. They are polynomials in $T$. The decomposition is unique.
  A systematic use of Jordan decomposition leads to a neat and more refined classification of elements of $I(Q)$ up to conjugacy, and as we shall see, also $z$-equivalence.

Let $T$ be in $O(Q)$. Let $\V_c = \V \otimes_\R \C$  be its complexification. We identify $T$ with $T\otimes_{\R} id$ and also consider it as an operator on $\V_c$.
We say that an
eigenvalue $\lambda$ of $T$ is {\it pure} if the corresponding eigenspace $\{v \in \V| (T- \lambda)v = 0\}$, coincides with the generalised
eigenspace $\{v \in \V| (T-\lambda)^{n+1} v = 0\}$. Otherwise $\lambda$ is {\it mixed}.

It is customary to call $v\in V$ {\it time-like} (resp. {\it space-like}, resp. {\it light-like}) if
$Q(v) > 0$, (resp. $Q(v) <  0$, resp. $Q(v) = 0$). A subspace $\W$ is  {\it time-like} (resp. {\it space-like}, resp. {\it light-like}) if $Q|_{\W} > 0$
(resp. $Q|_{\W} <  0$, resp. $Q|_{\W} = 0$).

The roots of a polynomial of the form $x^2 - 2ax + 1$, with $|a| < 1$ are of the form
$e^{\pm i\theta}$. With the restriction $0 < \theta < \pi$, the $\theta$ is uniquely determined. When $x^2 - 2ax + 1$ is a factor of the characteristic
polynomial of a transformation, the $\theta$ will be called {\it the rotation angle} of the transformation.

\begin{theorem} \label{main}{Let $T$ be in $I(Q)$.

i) Only the eigenvalue $1$ can be mixed.

ii) The eigenvectors corresponding to eigenvalue $-1$ are necessarily space-like. $T$ is orientation-preserving iff the multiplicity of the
eigenvalue $-1$ is even. Equivalently,
$T$ is orientation-preserving iff $det \, T = 1$.

iii) Suppose $T$ has a time-like eigenvector. Then $T$ is semisimple, and its characteristic polynomial $\chi_T(x)$ has the form

 $\chi_T(x) = (x-1)^l(x+1)^m \chi_{oT}(x)$, where $\chi_{oT}(x) = \Pi_{j=1}^{s}
 (x^2 - 2a_jx + 1)^{r_j}$, $|a_j| < 1.$

\noindent Here $a_j$'s are distinct, and $l\ge 1$. Let $k = \sum_{j=1}^{s}r_j + [\frac{m}{2}]$. We agree to call $T$ a {$k$-rotatory elliptic} if $m$ is even,
and a {$k$-rotatory elliptic inversion} if $m$ is odd.

iv) $T$ can have only one pair of real eigenvalues $(r, r^{-1})$, $r \not= \pm 1$. Such $r$ is necessarily positive. The eigenspaces corresponding to $r, r^{-1}$
are $1$-dimensional, and are light-like.
Suppose this is the case. Then $T$ is semisimple, and its characteristic polynomial $\chi_T(x)$ has the form

 $\chi_T(x) = (x-1)^l(x+1)^m \chi_{oT}(x)$, where $\chi_{oT}(x) =  (x-r)(x-r^{-1})\Pi_{j=1}^{s}(x^2 - 2a_jx + 1)^{r_j}$, $|a_j| < 1.$ Here $a_j$'s are distinct.
Let $k = \sum_{j=1}^{s}r_j + [\frac{m}{2}]$. We agree to call $T$ a {$k$-rotatory hyperbolic} if $m$ is even, and a {$k$-rotatory hyperbolic inversion} if $m$ is odd.

 v) Suppose $T$ is not semisimple. Then $T$ has a $1$-dimensional    light-like eigenspace with eigenvalue $1$. Let $T = T_sT_u$ be its Jordan decomposition,
 $T_u \not= Id$. The characteristic polynomial $\chi_T(x)$ has the form

 $\chi_T(x) = (x-1)^l(x+1)^m \chi_{oT}(x)$, where $\chi_{oT}(x) = \Pi_{j=1}^{s}(x^2 - 2a_jx + 1)^{r_j}$, $|a_j| < 1.$

\noindent Here $a_j$'s are distinct, and $l \ge 3$. Let $k = \sum_{j=1}^{s}r_j + [\frac{m}{2}]$. We agree to call $T$ a {$k$-rotatory parabolic} if $m$ is even,
and a {$k$-rotatory parabolic inversion} if $m$ is odd. $T_s$ is elliptic, and the minimal polynomial of $T_u$ is $(x-1)^3.$

vi) Every $T\in I(Q)$ satisfies either iii), or iv), or v).}
\end{theorem}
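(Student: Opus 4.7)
My plan is to exploit three ingredients: the $T$-invariance $Q(Tv, Tw) = Q(v, w)$, the Jordan decomposition $T = T_sT_u$ on $\V_c$ (with $T_s, T_u \in O(Q)$ as stated in the introduction), and the signature $(1, n)$ of $Q$. The foundational observation is that the generalized eigenspaces $V_\lambda$ of $T$ (equivalently the ordinary $\lambda$-eigenspaces of $T_s$) are $Q$-orthogonal for distinct eigenvalues unless $\lambda \mu = 1$: applying $Q(T_s v, T_s w) = Q(v, w)$ to eigenvectors gives $(\lambda\mu - 1) Q(v, w) = 0$. Consequently the $V_\lambda$ pair as $(V_\lambda, V_{\lambda^{-1}})$, each such pair is non-degenerately coupled by $Q$, and $\V$ decomposes $Q$-orthogonally into these pairs together with $V_1$ and $V_{-1}$.

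The signature then restricts which eigenvalues can appear. A complex $\lambda$ with $|\lambda| \neq 1$ would produce a four-fold set $\{\lambda, \bar\lambda, \lambda^{-1}, \bar\lambda^{-1}\}$ and a real $T$-invariant subspace of signature $(2k, 2k)$, which cannot embed in $(1, n)$, ruling out such $\lambda$. A real pair $(r, r^{-1})$ with $r \neq \pm 1$ yields signature $(\dim V_r, \dim V_r)$ on $V_r \oplus V_{r^{-1}}$, forcing $\dim V_r = 1$ (light-like) and allowing at most one such pair; the sign $r > 0$ follows because $T \in I(Q)$ preserves the forward light cone, so a light-like eigenvector must be sent to a positive multiple of itself. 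For (ii), a time-like or light-like $-1$-eigenvector would force $T$ to swap the two sheets of the hyperboloid or the two halves of the light cone, contradicting $T \in I(Q)$; hence $V_{-1}$ is negative definite and $T|_{V_{-1}}$, being orthogonal on a definite form, is semisimple. An analogous argument shows that for each complex conjugate pair $e^{\pm i \theta}$ (with $\theta \neq 0, \pi$) the associated real subspace is negative definite (the positive part $1$ of $Q$ cannot hide inside an even-dimensional indefinite piece), so $T$ is semisimple there. This proves (i): only $\lambda = 1$ can be mixed. The orientation claim in (ii) follows from $\det T = \prod \lambda_i = (-1)^m$, since every paired eigenvalue product is $1$.

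For the dynamical trichotomy: if $T$ has a time-like eigenvector $v$, then $Q(Tv, Tv) = Q(v, v) > 0$ forces the eigenvalue to be $\pm 1$, and by (ii) it must be $1$; the decomposition $\V = \langle v \rangle \oplus v^\perp$ makes $T|_{v^\perp}$ an orthogonal map of a negative definite space, hence semisimple, giving (iii) together with its normal form. In case (iv), $(V_r \oplus V_{r^{-1}})^\perp$ is negative definite and $T$ is semisimple on the complement. For (v), a non-semisimple $T$ must, by (i), have a Jordan block of size $\geq 2$ at $\lambda = 1$; a direct computation shows that $Q$ restricted to a single size-$k$ block at $\lambda = 1$ has signature $(\lceil k/2 \rceil, \lfloor k/2 \rfloor)$ up to a sign invariant for $k$ odd, and is degenerate (requiring a hyperbolic pair with a second block of equal size) for $k$ even. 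The constraint that the positive part of $Q$ is exactly $1$ therefore forces the largest Jordan block at $\lambda = 1$ to have size exactly $3$, with the right sign giving signature $(1, 2)$, and with its bottom vector a light-like $1$-eigenvector; equivalently the minimal polynomial of $T_u$ is $(x - 1)^3$. Part (vi) is then exhaustion: a semisimple $T$ with no time-like eigenvector and no real pair $r \neq \pm 1$ would leave no source for the single positive direction of $Q$, a contradiction. The main technical hurdle is the fine Jordan-block signature analysis at $\lambda = 1$ in (v)—producing the standard form on a block, pairing even-sized blocks hyperbolically, and ruling out all nontrivial blocks except the size-$3$ block of signature $(1, 2)$—after which the claimed characteristic polynomial, the bound $l \geq 3$, and the description of $T_s$ as an elliptic element of $I(Q)$ follow.
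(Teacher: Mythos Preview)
Your proposal is correct and follows the same overall architecture as the paper: the eigenspace orthogonality relation $(\lambda\mu-1)Q(v,w)=0$, the signature bookkeeping that pins down the possible eigenvalue configurations, and the Jordan decomposition to isolate the unipotent part. Parts (i)--(iv), (vi) and (ii) are argued essentially as in the paper.

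The one genuine methodological difference is in part (v). The paper proves a geometric lemma: if $\W$ is $T$-orthogonally indecomposable of dimension $\ge 3$, then $T|_\W$ is unipotent and $\dim\W=3$. Its proof takes the light-like eigenvector $v$, looks at the tangent hyperplane $\tau$ to the null cone at $v$, passes to the quotient $\tau/\langle v\rangle$ on which the induced form is negative definite, and observes that the induced unipotent map there must be the identity; indecomposability then forces $\dim\tau/\langle v\rangle=1$. Your route instead invokes the Jordan-block signature calculus for orthogonal groups: a single size-$k$ block at eigenvalue $1$ carries a form of signature $(\lceil k/2\rceil,\lfloor k/2\rfloor)$ or its flip when $k$ is odd, while even-size blocks must occur in hyperbolic pairs contributing signature $(k,k)$; the constraint ``positive index $=1$'' then singles out exactly one size-$3$ block of signature $(1,2)$ and forces $m_{T_u}(x)=(x-1)^3$. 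Both arguments are valid. The paper's is more self-contained and geometric (no appeal to the general block classification), while yours is more systematic and makes transparent why larger or even blocks are excluded purely by counting positive directions; it also generalizes immediately to other signatures $(p,q)$.
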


\vskip .3in

A significant consequence of theorem 1.1 is the classification of conjugacy classes.

\vskip .3in

\begin{theorem}\label{conjcl}
{The characteristic polynomial and the minimal polynomial
of $T$ in $I(Q)$ determine the conjugacy class of $T$.}
\end{theorem}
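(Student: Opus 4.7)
The plan has three steps: first, extract the real Jordan form of $T$ from $\chi_T(x)$ and $\mu_T(x)$ using \thmref{main}; second, $Q$-orthogonally decompose $\V$ into canonical $T$-invariant pieces on which $T$ has a standard form; third, realize the resulting conjugation by an element of $I(Q)$ rather than merely $O(Q)$.

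For the first step, \thmref{main} places $T$ in exactly one of the mutually exclusive families iii), iv), v), and the family is detected from the pair $(\chi_T, \mu_T)$: a real eigenvalue outside $\{\pm 1\}$ signals case iv), the condition $(x-1)^2 \mid \mu_T$ singles out case v), and otherwise we are in case iii). By part i), only the eigenvalue $1$ can be mixed, so on every other primary component the Jordan blocks have size one and are read off from $\chi_T$. In case v), part v) forces the $1$-eigenspace of $T$ to be one-dimensional, so there is a single Jordan block for eigenvalue $1$ of size equal to the multiplicity of $(x-1)$ in $\mu_T$, namely $3$. Hence $(\chi_T, \mu_T)$ determines the real Jordan form of $T$.

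For the second step, since $T \in O(Q)$, generalized eigenspaces for eigenvalues $\lambda, \mu$ with $\lambda\mu \neq 1$ are automatically $Q$-orthogonal. Grouping primary components accordingly yields a $T$-invariant $Q$-orthogonal decomposition of $\V$ into pieces carrying: space-like rotations by each $\theta_j$; the space-like action $-I$ on the $(-1)$-eigenspace (by ii)); and, according to the case, a time-like fixed line (case iii), a time-like $2$-plane on which $T = \mathrm{diag}(r, r^{-1})$ in a light-like basis (case iv), or a signature-$(1,2)$ $3$-plane carrying the parabolic Jordan block (case v). On each space-like piece, $T$ is a semisimple element of a compact orthogonal group, where conjugacy is classically determined by the characteristic polynomial. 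On the Lorentzian $2$-plane of case iv), the light-like-basis form is canonical. On the parabolic $3$-plane $V_1$ of case v), the relation $T \in O(Q|_{V_1})$ forces any Jordan basis $e_1, e_2, e_3$ (with $T e_1 = e_1$, $T e_2 = e_1 + e_2$, $T e_3 = e_2 + e_3$) to satisfy $Q(e_1, e_1) = Q(e_1, e_2) = 0$, $Q(e_2, e_2) = -Q(e_1, e_3) = -\alpha$, and $Q(e_2, e_3) = \alpha/2$ for a single parameter $\alpha$; the ambient signature $(1,n)$ forces $V_1$ to have signature $(1,2)$ and hence $\alpha > 0$, and the rescaling $e_i \mapsto \lambda e_i$ preserves the Jordan shape while multiplying $\alpha$ by $\lambda^2$, so a matching Jordan basis for any second $T'$ can be chosen to reproduce the Gram matrix exactly. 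Assembling the piecewise data yields an element of $O(Q)$ conjugating $T$ to $T'$.

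For the third step, I upgrade this $O(Q)$-conjugator to an $I(Q)$-conjugator. It suffices to exhibit, in each case, a component-reversing involution lying in $Z_{O(Q)}(T)$: the reflection in the time-like fixed line in case iii); $-I$ on the Lorentzian $2$-plane extended by the identity on its space-like complement in case iv); and $-I$ on the parabolic $3$-plane extended by the identity in case v). Composing the $O(Q)$-conjugator with such an involution when necessary produces the desired element of $I(Q)$. The main obstacle throughout is the parabolic case v): identifying the single continuous invariant $\alpha$, pinning down its sign from the signature of the $3$-plane, normalizing its magnitude by a Jordan rescaling, and finally arranging the conjugator so that it preserves the chosen component of the hyperboloid. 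Once these points are handled, the rest of the argument is a routine assembly of the orthogonal pieces.
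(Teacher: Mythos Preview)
Your handling of the eigenvalue $1$ contains a genuine gap. You read part v) of \thmref{main} as saying that the full $1$-eigenspace of a parabolic $T$ is one-dimensional, and from this deduce a single size-$3$ Jordan block for the eigenvalue $1$. But the theorem only asserts the existence of a one-dimensional \emph{light-like} subspace of eigenvectors; when the multiplicity $l$ of $(x-1)$ in $\chi_T$ exceeds $3$, the full $1$-eigenspace has dimension $l-2$, and the Jordan structure for eigenvalue $1$ is one block of size $3$ together with $l-3$ blocks of size $1$. What actually pins this down is the paper's Lemma~2.1 (and the signature constraint): the $T$-orthogonally indecomposable summands have dimension $1$, $2$, or $3$, and the unique $3$-dimensional one carries signature $(1,2)$, so there is exactly one of them. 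Without invoking this, $(\chi_T,\mu_T)$ alone does not obviously rule out, say, two size-$3$ blocks or a size-$2$ block.

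The same omission propagates to your Step~2 decomposition in all three cases: your list of pieces (rotation planes, the $(-1)$-eigenspace, and a single time-like block) never includes the space-like part of the fixed set $\V_1$, so the summands fail to exhaust $\V$ whenever $l>1$ in case iii), $l>0$ in case iv), or $l>3$ in case v). Once this negative-definite fixed summand is restored, your argument coincides with the paper's: both proofs decompose $\V$ into $T$-indecomposable orthogonal summands of the allowed types and match the decompositions for $T$ and $T'$ by an isometry. Your explicit normalization of the Gram parameter $\alpha$ on the parabolic $3$-plane and your Step-3 device of composing with a sheet-swapping centralizing involution make precise what the paper leaves to the reader. One small caution there: in case iii) the ``reflection in the time-like fixed line'' in its usual meaning fixes that line pointwise and hence already lies in $I(Q)$; the involution you want is the one that \emph{negates} a time-like fixed vector and is the identity on its orthogonal complement.
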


We call $T$  {\it elliptic} if it is {$k$-rotatory elliptic} or {$k$-rotatory elliptic inversion} for some $k$. Similarly for $hyperbolic$ or $parabolic$.
It will be convenient to call
$\chi_{o T}(x)$, the {\it reduced characteristic polynomial} of $T$. Its degree is always even. Let $k' = \frac{1}{2}\; deg \,  \chi_{o T}(x)$.

The number of conjugacy classes is infinite. This infinity arises roughly from the eigenvalues  $r$ and the rotation-angles $\theta_j$ with multiplicity $r_j$ in
the notation of theorem 1.1. These are the ``numerical invariants" of a transformation. The ``spatial invariants" of a transformation are the corresponding
orthogonal decomposition of the space, and the signatures of the quadratic form of the summands. Roughly speaking the ``spatial invariants" define the $z$-class.

We determine the centralizers of elements and $z$-classes in \S 4, and also count their number.

\begin{theorem}\label{count}{For $u$ a natural number, let $p(u)$ denote the Eulerian partition function, $p(0) = 1$.

i) There are $(n + 1) p(0) +  (n -1) p(1) + (n - 3) p(2) +  \ldots +  \epsilon p([\frac{n}{2}])$
where $\epsilon = 1$ or $2$ according as $n$ is even or odd,  \noindent $z$-classes of elliptic elements. For this count identity is considered as an elliptic element.

ii) There are  $([\frac{n-1}{2}] + 1) p(0) +  ([\frac{n-1}{2}])  p(1) +  ([\frac{n-1}{2}] - 1)  p(2)  \ldots +   p([\frac{n-1}{2}])$
\noindent $z$-classes of hyperbolic  elements.

iii) There are $(n-1) p(0) +  (n-3)  p(1) +    \ldots +  \epsilon p([\frac{n-2}{2}])$
where $\epsilon = 1$ or $2$ according as $n$ is even or odd,
\noindent $z$-classes of parabolic  elements.
}
\end{theorem}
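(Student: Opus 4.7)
The plan is to identify the centralizer $Z(T)$ of each $T \in I(Q)$ in terms of its ``spatial invariants'', determine when two such centralizers are conjugate in $I(Q)$, and count the resulting equivalence classes for each of the three dynamical types.

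First, apply \thmref{main}. Every $T \in I(Q)$ determines an orthogonal decomposition of $\V$ into $T$-invariant pieces: the $(+1)$-eigenspace $V_1$ of dimension $l$, the $(-1)$-eigenspace $V_{-1}$ of dimension $m$, rotation blocks $W_j$ of dimensions $2r_j$ (all space-like), and, depending on the type, either a 2-dim Lorentzian axis $A$ on which $T$ acts as a boost (hyperbolic case) or a 3-dim Lorentzian sub-block inside $V_1$ on which $T$ acts as a single $3\times 3$ Jordan block for eigenvalue one (parabolic case). This decomposition is canonical once Jordan decomposition is applied, so $Z(T)$ preserves it and can be computed piecewise. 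A direct check (using the commutation $g T = T g$ combined with $g^t Q g = Q$ in an adapted basis) yields: for elliptic $T$, $Z(T) \cong O(1, l-1) \times O(m) \times \prod_j U(r_j)$; for hyperbolic $T$, $Z(T) \cong \R^\times \times O(l) \times O(m) \times \prod_j U(r_j)$, where the $\R^\times$ is the centralizer of the boost inside $O(A)$; and for parabolic $T$, $Z(T) \cong Z_{O(V_1)}(T|_{V_1}) \times O(m) \times \prod_j U(r_j)$, in which $Z_{O(V_1)}(T|_{V_1})$ has an $(l-2)$-dim abelian unipotent part (the $1$-dim parabolic flow together with $(l-3)$ ``twist'' generators coupling the Jordan core to the fixed space-like directions of $V_1$, arising as off-diagonal matrix entries tied together by $Q$-orthogonality) together with a compact factor $O(l-3)$ acting on those extra fixed directions.

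Second, since the numerical data $r, \theta_j$ have dropped out of $Z(T)$, two such centralizers are conjugate in $I(Q)$ iff the pieces of their underlying decompositions match in dimension and in quadratic-form signature. The $z$-class invariant of $T$ is therefore: for elliptic $T$, the ordered pair $(l, m)$ (since $V_1$ is time-like and $V_{-1}$ space-like, and these signatures are preserved by conjugation in $O(Q)$) together with the unordered partition of $N = \sum r_j$; for hyperbolic $T$, the unordered pair $\{l, m\}$ (both pieces are space-like subspaces of the $(n-1)$-dim space-like complement of the axis, and an element of $O(Q)$ swapping two perpendicular space-like subspaces of the respective dimensions is easy to exhibit) together with the partition of $N$; and for parabolic $T$, again the ordered pair $(l, m)$, because the $(l-3)$ extra space-like directions in $V_1$ produce $(l-3)$ abelian twist generators inside $Z(T)$ (genuinely changing the isomorphism type of $Z(T)$) while the $m$ directions in $V_{-1}$ contribute only the compact factor $O(m)$, and the two sides are therefore structurally asymmetric.

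Third, count. The dimensional relations are $l + m + 2N = n + 1$ (elliptic, with $l \geq 1$), $l + m + 2N = n - 1$ (hyperbolic, the 2-dim axis absorbing two of the $n+1$ dimensions), and $l + m + 2N = n + 1$ with $l \geq 3$ (parabolic). For fixed $N$, the number of allowable $(l, m)$ pairs, ordered or unordered as above, is respectively $n + 1 - 2N$, $\lfloor (n-1)/2 \rfloor - N + 1$, and $n - 1 - 2N$. Summing $p(N)$ times these coefficients over $N \in \{0, 1, \ldots, \lfloor n/2 \rfloor\}$, $\{0, 1, \ldots, \lfloor (n-1)/2 \rfloor\}$, and $\{0, 1, \ldots, \lfloor (n-2)/2 \rfloor\}$ respectively yields precisely the three sums in (i), (ii), (iii). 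The main obstacle is the parabolic centralizer computation, specifically locating the $(l-3)$ twist generators; these entries are missed by a naive block-diagonal Ansatz and emerge only upon solving $g T = T g$ in full generality and then imposing $g^t Q g = Q$. Once they are found, the asymmetry between $V_1$ and $V_{-1}$ in the parabolic case, as against the symmetric hyperbolic situation, falls out and the ordered-versus-unordered dichotomy in parts (ii) and (iii) is explained.
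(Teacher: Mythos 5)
Your proposal is correct and follows essentially the same route as the paper: compute $Z(T)$ blockwise from the canonical orthogonal decomposition attached to $T$ (obtaining $I(Q|_\Lambda)\times O(m)\times\prod_j U(r_j)$ in the elliptic case, an extra $\R$-factor from the axis in the hyperbolic case, and the $\R\times(\R^{l-3}\rtimes O(l-3))$ unipotent-plus-compact piece in the parabolic case), observe that the $z$-class is determined by the type together with the dimensions/signatures of the summands (with $\{l,m\}$ unordered only in the hyperbolic case), and sum $p(N)$ against the number of admissible $(l,m)$ for each $N$. The only quibble is that the centralizer of the boost inside the isometries preserving the positive cone is $I_o(Q|_{\W})\cong\R$ rather than $\R^{\times}$, which has no effect on the count.
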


Each $z$-class is a manifold, and so  each component has a well-defined dimension. It is said to be {\it generic} if it has
the maximum dimension ( $= dim\; I(Q)$).

\begin{theorem}\label{generic}
{ i) Let $n$ be even, $n = 2n'$. The generic $z$-classes are the $z$-classes
of

\noindent $n'$-rotatory elliptics,    $(n'-1)$-rotatory hyperbolics,
and  $(n'-1)$-rotatory hyperbolic

\noindent inversions, all with pairwise distinct rotation angles.

ii) Let $n$ be odd, $n = 2n' + 1$. The generic $z$-classes are the $z$-classes of $n'$-rotatory   hyperbolics, $n'$-rotatory elliptic inversions, and $(n'-1)$-rotatory
hyperbolic inversions, all with pairwise distinct rotation angles.
}

\end{theorem}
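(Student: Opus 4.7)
The plan is to characterize the generic $z$-classes as those of \emph{regular semisimple} elements of $I(Q)$ (those whose eigenvalues in $\C$ are all pairwise distinct) via a dimension formula, and then enumerate such elements using Theorem 1.1.

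From the fibration structure of the $z$-class described in \S 4, its dimension satisfies
\[
\dim(z\text{-class of } T) = \dim I(Q) - \dim Z(T) + d(T),
\]
where $d(T)$ denotes the number of independent continuous numerical invariants (the rotation angles $\theta_j$, plus the real eigenvalue $r$ in the hyperbolic case) that parametrize the conjugacy classes inside the $z$-class of $T$. One has $d(T)\le \dim Z(T)$ always, and the $z$-class is generic precisely when equality holds.

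For semisimple $T$, the centralizer respects the orthogonal decomposition of $\V$ into invariant subspaces afforded by Theorem 1.1, so both $\dim Z(T)$ and $d(T)$ split additively over these pieces. Using the concrete centralizer computations of \S 4: each rotation block $V_{\theta_j}$ of real dimension $2r_j$ contributes $r_j^2$ (the real dimension of the unitary centralizer $U(r_j)$) to $\dim Z(T)$ and $1$ to $d(T)$; a hyperbolic pair $V_r\oplus V_{r^{-1}}$ contributes $1$ to both (from $SO^+(1,1)$); and the eigenspaces $V_1, V_{-1}$ of dimensions $l, m$ contribute $\binom{l}{2}, \binom{m}{2}$ to $\dim Z(T)$ but $0$ to $d(T)$. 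Equating the two sums forces $l\le 1$, $m\le 1$, and every $r_j=1$, i.e.\ pairwise distinct rotation angles of multiplicity one. For non-semisimple (parabolic) $T$, the centralizer of $T_u|_{V_1}$ in $O(V_1)$ is at least one-dimensional (it contains the one-parameter unipotent subgroup generated by $T_u$), while $T_u$ contributes nothing to $d(T)$; hence the defect $\dim Z(T)-d(T)\ge 1>0$, and no parabolic has a generic $z$-class.

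It remains to enumerate the regular semisimple elements. Imposing $l, m\in\{0,1\}$, all $r_j=1$, together with the dimension identities $l+m+2s=n+1$ (elliptic, which further requires $l\ge 1$) or $l+m+2+2s=n+1$ (hyperbolic), with $m$ odd corresponding to an inversion and $k=s+[m/2]$ the rotation index, a parity check for $n=2n'$ and $n=2n'+1$ matches each feasible solution with exactly one entry of (i) or (ii). For instance, when $n=2n'$, elliptic forces $(l,m,s)=(1,0,n')$ (the $n'$-rotatory elliptic), and hyperbolic admits the two solutions $(l,m,s)=(1,0,n'-1)$ and $(0,1,n'-1)$, producing the $(n'-1)$-rotatory hyperbolic and the $(n'-1)$-rotatory hyperbolic inversion respectively; the odd case is analogous and yields the three items in (ii).

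\textbf{Main obstacle.} The argument rests on the piece-by-piece centralizer computation inside $I(Q)$ (not merely $O(Q)$)---especially the mixed-signature $V_1$ in the elliptic case and the unipotent block in the parabolic case---which is exactly the analysis carried out in \S 4.
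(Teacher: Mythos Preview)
Your argument is correct and follows the same overall strategy as the paper---reduce to the centralizer computations of \S4 and then enumerate---but you carry it out more explicitly. The paper's own proof is essentially two lines: it cites Theorem~2.1 of \cite{K} to assert that in a semisimple Lie group a $z$-class is generic if and only if the centralizer is abelian, and then observes that among the centralizers listed in \S4 the abelian ones are exactly those in the statement.

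Your route unpacks this. Instead of invoking the abelian-centralizer criterion as a black box, you write down the dimension formula $\dim(z\text{-class of }T)=\dim I(Q)-\dim Z(T)+d(T)$ and compare $d(T)$ with $\dim Z(T)$ factor by factor in the product decomposition of $Z(T)$. For semisimple $T$ this recovers precisely the abelian condition: each factor $U(r_j)$, $O(m)$, $I(Q|_\Lambda)$ has dimension equal to its contribution to $d(T)$ exactly when $r_j=1$, $m\le1$, $l\le1$, i.e.\ when it is abelian. So on the semisimple side the two arguments are the same computation in different clothing.

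Where your treatment is slightly sharper is the parabolic case. You argue directly that the one-parameter unipotent group through $T_u$ contributes $1$ to $\dim Z(T)$ but $0$ to $d(T)$, forcing a positive defect. This is worth doing, because the bald statement ``generic $\Leftrightarrow$ $Z(T)$ abelian'' is delicate for non-semisimple elements: already the pure translation in $I(\H^2)$ has $Z(T)\cong\R$ abelian, yet its $z$-class has dimension $2<3$. The paper's appeal to \cite{K} presumably accounts for this through the normalizer term in the fibration, but your hands-on inequality makes the exclusion of parabolics self-contained.

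Your final enumeration by parity of $n$ matches the paper's statement exactly.
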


\vskip .3in

In \S 5 we give some simple criteria to detect the type of  an isometry  of $I(\H^n)$ to be

\noindent hyperbolic, resp. elliptic, resp. parabolic. For example, $T$ is hyperbolic iff $\chi_{oT}(1) <  0$. $T$ is parbolic, resp. elliptic,  iff $\chi_{oT}(1) >  0$ and $(x-1)^2$
divides (resp. does not divide) the minimal polynomial of $T$. There are also criteria in terms of
the trace of $T$.

Previously Chen and Greenberg \cite{CG}, Kulkarni  \cite{K2}  analysed the conjugacy classes of hyperbolic isometries. The criterion for conjugacy provided in Theorem 1.2
appears to be  new.   Ahlfors, \cite{ahlfors}, suggested the use
of Clifford algebras to study M\"obius transformations in higher dimensions, cf. \cite{ahlfors2}, \cite{cw}, \cite{wada}, \cite{waterman}
 for elaboration of this theme. The present paper presents an alternate  approach. It appears that Clifford-algebraic approach has not yet been worked out for
 orientation-reversing transformations.

\section{Proof of theorem 1.1}

1) Let $\V_c$ denote the complexification of $\V$, and $Q_c$ the bilinear extension of $Q$ to $\V_c$. Let $T$ be a linear transformation of $\V_c$, and
its Jordan decomposition $T = T_sT_u$. Recall some basic facts about this set up. Let
$\V_{c, \lambda}$ denote the generalized eigenspace of $T$ with eigenvalue $\lambda$. Then it is the (usual) eigenspace of $T_s$.  Let $T$ be in $O(Q_c)$, and
let $< , >$ denote the corresponding bilinear form. We have for $v, w \in \V_{c, \lambda}$

$<v, w>  =  <Tv, Tw>  =  <T_sv, T_sw>  =  \lambda^2 <v, w>.$

\noindent So if $<v, w>  \not= 0$, then $\lambda = \pm 1$. Or to put it another way, if $\lambda \not= \pm 1$ then $Q|_{\V_{c, \lambda}} = 0.$

\noindent Also for $v \in \V_{c, \lambda}$ and $ w \in \V_{c, \mu}$ we have

$<v, w>  =  <Tv, Tw>  =  <T_sv, T_sw>  =  \lambda \mu <v, w>.$

\noindent So unless $\lambda \mu =1$ we have $\V_{c, \lambda}$ and $\V_{c, \mu}$ are othogonal with respect to $Q_c$. Let $\oplus$ denote the orthogonal direct sum,
and $+$ the usual direct sum of subspaces. We have

$\V_c = \V_{c, 1} \oplus \V_{c, -1} \bigoplus \oplus_{\lambda \not= \pm 1}(\V_{c, \lambda} +
\V_{c, \lambda^{-1}}).$

\noindent Note in particular, $dim \; \V_{c, \lambda} = dim \; \V_{c, \lambda^{-1}}.$

2) Now let $T$ be in $I(Q).$ Note that a unipotent transformation must preserve

\noindent $\{v \in\V|  Q(v) \ge 0\},$ so $T_u$ and hence $T_s$, are also in $I(Q)$.

Suppose that $T$ has a time-like eigenvector $v$. Since $<v, v>  \not= 0$, we see that the corresponding eigenvalue is equal to $1$.   $T$ leaves the orthogonal complement
of $v$ invariant, and the metric is negative definite on this complement. So $T$ must be semisimple.  The characteristic polynomial $\chi_T(x)$ is of the form

$\chi_T(x) = (x-1)^l(x+1)^m \chi_{oT}(x)$, where $\chi_{oT}(x) = \Pi_{j=1}^{s}(x^2 - 2a_jx + 1)^{r_j}$, $|a_j| < 1.$

\noindent Here $a_j$'s are distinct. We have $l \ge 1$. The minimal polynomial $m_T(x)$ divides the characteristic polynomial, it has the same factors as the characteristic
polynomial, and for a semisimple element it is a product of distinct factors. So

$m_T(x) = (x-1)(x+1)^{\epsilon}\Pi_{j=1}^{s}(x^2 - 2a_jx + 1)$.

\noindent where $\epsilon = 0 $ if $m = 0,$ and $\epsilon = 1 $ if $m > 0$.

\noindent Let $k = \sum_{j=1}^{s}r_j + [\frac{m}{2}]$. We have agreed to call such $T$ a  {$k$-rotatory elliptic} if $m$ is even, and a {$k$-rotatory elliptic inversion}
if $m$ is odd.

 3) Let $T$ be in $I(Q)$ and suppose that $r$ is a real eigenvalue $\not= 1$. By 1) the corresponding eigenvector is light-like.
Now $T$ preserves $\{v \in \V | <v, v> \ge 0\}$.
 So $r$ must be positive. By 1) $r^{-1}$ must also be an eigenvalue, since $Q$ is non-degenerate. The dimension of a subspace on which $Q = 0$ is at most $1$. So
$dim \; \V_r =  dim \: \V_{r^{-1}} = 1$, and $Q|_{\V_r + \V_{r^{-1}}}$ is non-degenerate.
Now  the orthogonal complement of $\V_r + \V_{r^{-1}}$ is $T$-invariant, and $Q|_{\V_r + \V_{r{-1}}}$ is negative semidefinite. So $T$ is semisimple. The characteristic
polynomial $\chi_T(x)$ is of the form

$\chi_T(x) = (x-1)^l(x+1)^m \chi_{oT}(x)$, where $\chi_{oT}(x) = (x-r) (x - r^{-1})\Pi_{j=1}^{s}(x^2 - 2a_jx + 1)^{r_j}$.

\noindent Here $|a_j| < 1$, and  are distinct. Moreover

 $m_T(x) = (x-1)^{\epsilon_1}(x+1)^{\epsilon_2}(x-r)(x-r^{-1})\Pi_{j=1}^{s}(x^2 - 2a_jx + 1)$,

\noindent where $\epsilon_1$ (resp. $ \epsilon_2$) is $0$ if $l$ (resp. $m$) is $0$, and
equal to $1$ if $l$ (resp. $m$) $ > 0.$

\noindent Let $k = \sum_{j=1}^{s}r_j + [\frac{m}{2}]$. We have agreed to call such $T$ a  {$k$-rotatory hyperbolic} if $m$ is even, and a {$k$-rotatory hyperbolic
inversion} if $m$ is odd.

4) Now let $T$ be in $I(Q)$, and $\lambda$ a non-real complex eigenvalue. Let $v \in \V_c$ be a corresponding eigenvector. Write $v = v_1 + i v_2$ where $v_1$ resp. $v_2$
are real, and $\lambda = a + ib$, where $a, b$ are real and $b \not= 0$. We have $Tv = \lambda v$, so

$Tv_1 = av_1 - bv_2, Tv_2 = bv_1 + a v_2.$

\noindent Since $b \not=0$, we see that both $v_1, v_2$ are non-zero, and they are linearly independent. Let $\W = span \{v_1, v_2\}$. It is a $2$-dimensional $T$-invariant
subspace. It is indeed a {\it minimal} invariant subspace. So $Q|_{\W}$ must be either $0$, or it must be non-degenerate, for otherwise the nullspace of $\W$
would be $1$-dimensional and would be $T$-invariant. Also $Q|_{\W}$ cannot have signature $(1, 1)$,
for an elementary fact from $2$-dimensional Lorentzian geometry is that
the eigenvalues of $T|_{\W}$ would be real. So $Q|_{\W}$ must be negative definite.
But then the eigenvalues of $T$ on $\W\otimes_{\R}\C$ are $(\lambda, \bar{\lambda})$, and must be of the form $(e^{i\theta}, e^{-i\theta})$.
By interchanging $\lambda$ with $\bar{\lambda}$ if necessary, we may take  $0 < \theta < \pi$. By making this argument on $\V_{c, \lambda}$
we see that $\V_{c, \lambda} + \V_{c, \bar {\lambda}}$ is a complexification of a real subspace on which $Q$ is negative definite. Let us denote this real
subspace  $\V_{\lambda, \bar {\lambda}}$, and call it  the {\it real trace} of the eigenspace of $\V_{c, \lambda} + \V_{c, \bar {\lambda}}$.

5) Suppose $-1$ is an eigenvalue of $T$ and $v$ a corresponding eigenvector. Since $T$ preserves the set
$\{u \in \V| Q(u) \ge 0\}$, $v$ must be outside this set. In other words, $v$ is space-like. Now combining 3), 4) we see that $1$
is the only possible eigenvalue which is mixed.

6) $\V$ is said to be {\it orthogonally indecomposable} with respect to $T$, or $T$-{\it orthogonally indecomposable} for short,  if $\V$ is not an orthogonal
 direct sum of proper $T$-invariant subspaces. Given $T$, we can decompose $\V$ into $T$-orthogonally indecomposable subspaces $\W$, $Q|_{\W}$ is non-degenerate.
So far we know three types of $T$-orthogonally indecomposable subspaces.

i) Dim $\W$ = 1: Here $\W$ is spanned by an eigenvector of non-zero length.

ii) Dim $\W$ = 2: Here $T|_{\W}$ has no real eigenvalue and $Q|_{\W}$ is negative definite. The eigenvalues of $T$ are $e^{\pm i\theta}$, where $0 < \theta < \pi$.

iii) Dim $\W = 2$: Here $\W = \V_r + \V_{r^{-1}}$, where both $\V_r$ and $\V_{r^{-1}}$
are spanned by eigenvectors of length $0$.

We now describe the fourth, and last, type of $T$-orthogonally indecomposable subspaces.

\vskip .3in

\begin{lemma}{Let $\W$ be an $T$-orthogonally indecomposable subspace of dimension $\ge 3$. Then $T$ is unipotent and $dim \; \W = 3$.}
\end{lemma}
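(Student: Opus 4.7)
The plan is to use the case analysis of steps 1)--5) above to force $T|_{\W}$ to be unipotent with a one-dimensional fixed subspace, and then pin down $\dim\W=3$ by examining how a single Jordan block can sit inside a Lorentzian form.

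\textbf{$T|_{\W}$ is unipotent, and $Q|_{\W}$ is Lorentzian.} The form $Q|_{\W}$ is nondegenerate by orthogonal indecomposability, and inherits its signature from $(1,n)$, so it is either negative definite or of signature $(1,\dim\W-1)$. If $Q|_{\W}$ were negative definite, then $T|_{\W}$ would lie in a compact orthogonal group, hence be semisimple, and $\W$ would split orthogonally into $1$- and $2$-dimensional invariant pieces; for $\dim\W\ge 3$ this contradicts indecomposability. So $Q|_{\W}$ is Lorentzian. Next, if $T|_{\W}$ had a time-like eigenvector, step 2) would split $\W$ as an orthogonal sum of pieces of types (i) and (ii); a real eigenvalue $r\ne\pm 1$ would, by step 3), split off the two-dimensional piece $\V_r+\V_{r^{-1}}$ of type (iii); a non-real eigenvalue would, by step 4), split off a $Q$-negative-definite invariant plane. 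Each case contradicts indecomposability, so the eigenvalues of $T|_{\W}$ lie in $\{\pm 1\}$. By step 5), the eigenvalue $-1$ is pure and its eigenspace is space-like, hence nondegenerate, so it would again split $\W$; thus $T|_{\W}$ is unipotent. It is not the identity, for otherwise any nondegenerate line would split $\W$ off.

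\textbf{The fixed subspace is a light-like line.} Let $N=T|_{\W}-Id$. For $v\in\ker N$ with $Q(v)\ne 0$, the line $\R v$ is nondegenerate and $T$-invariant, yielding the forbidden decomposition $\W=\R v\oplus v^\perp$. Hence $\ker N$ is totally isotropic. In a Lorentzian form any totally isotropic subspace has dimension at most $1$, and $\ker N$ is nonzero since $N$ is nilpotent, so $\dim\ker N=1$.

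\textbf{Dimension equals $3$.} With $\dim\ker N=1$, the nilpotent $N$ has a single Jordan chain $e_0,\ldots,e_{k-1}$ satisfying $Ne_0=0$ and $Ne_i=e_{i-1}$, where $k=\dim\W$. Let $X=\log(T|_{\W})$, a nilpotent element of the Lie algebra of $O(Q|_{\W})$ sharing the same Jordan structure. The skew-adjointness relation $\langle Xu,v\rangle+\langle u,Xv\rangle=0$ translates in this basis to the recursion $\langle e_{i-1},e_j\rangle=-\langle e_i,e_{j-1}\rangle$ together with $\langle e_0,e_j\rangle=0$ for $j\le k-2$. Iterating, the whole Gram matrix is determined by the single entry $\langle e_0,e_{k-1}\rangle$; its required symmetry forces $k$ to be odd, and a direct diagonalisation shows that for $k=2m+1$ the resulting form has signature $(m+1,m)$ or $(m,m+1)$. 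Only $m=1$ gives Lorentzian signature $(1,k-1)$, so $\dim\W=k=3$. The main obstacle is this last signature count for a single Jordan block; everything preceding it is a direct application of the eigenvalue analysis already carried out in steps 1)--5).
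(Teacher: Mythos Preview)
Your proof is correct, but it follows a genuinely different path from the paper's own argument. Both proofs begin the same way, using the eigenvalue analysis of steps 1)--5) to reduce to the case where $T|_{\W}$ is unipotent with a single light-like eigenline. From there the two arguments diverge.

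The paper proceeds geometrically: it takes the tangent hyperplane $\tau=v^{\perp}$ to the light cone at the null eigenvector $v$, observes that the induced map $\bar T$ on the quotient $\tau/\langle v\rangle$ is unipotent and preserves a negative-definite form, hence equals the identity, and then notes that if $\dim(\tau/\langle v\rangle)=m-2\geq 2$ one can find a space-like vector fixed by $T$, contradicting indecomposability. Your approach is instead purely algebraic: you pass to $X=\log T$ in the Lie algebra, use skew-adjointness to force the Gram matrix of a single Jordan block of size $k=2m+1$ to have signature $(m,m+1)$ or $(m+1,m)$, and then match this against the Lorentzian signature $(1,k-1)$ to get $m=1$. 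This is essentially the classification of nilpotent orbits in $\mathfrak{o}(p,q)$ specialised to a single block, and it has the advantage of telling you immediately what happens for arbitrary signature, not just Lorentzian. The paper's argument is shorter and more self-contained; yours is more systematic.

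One small imprecision: your claim that ``the whole Gram matrix is determined by the single entry $\langle e_0,e_{k-1}\rangle$'' is not literally true for an arbitrary Jordan basis---the entries on anti-diagonals $i+j\geq k$ are additional free parameters. What is true (and sufficient for your purposes) is that after a further normalisation of the Jordan basis these extra entries can be killed, or alternatively that the signature is already pinned down by the observation that $\langle e_0,\dots,e_{m-1}\rangle$ is a totally isotropic subspace of dimension $m$, forcing $\min(p,q)\geq m$. Either way your conclusion stands.
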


\begin{proof} Let $dim \; \W = m$. From previous discussion we see that $T|_{\W}$ must be unipotent.    Let $v$ be an eigenvector. From 3) and 4) we must have
$v$ of length $0$, and eigenvalue $1$. Let $C = \{u \in \W| Q(u) = 0\}.$ This is a right circular cone. Since $Tv = v,$
$T$ must preserve the tangent space $\tau$ at $v$ to $C.$ Now $dim \; \tau = m - 1,$
and $Q|_{\tau}$ is degenerate. $Q|_{\tau}$ has nullity $1$, spanned by $v$,  and otherwise its signature is $(0, m-2)$. Let $\tau \rightarrow \tau/<v>$ be the
canonical projection. $Q$ induces the canonical symmetric quadratic form $\bar {Q}$ on
$\tau/<v>$. $T$ induces a linear transformation $\bar {T}$ on $\tau/<v>$, and it preserves $\bar {Q}$. Now $\bar {Q}$ has signature $(0, m-2)$,
so it is negative definite. $T$ is unipotent, so is $\bar T$.
So $\bar T$ is both semisimple and unipotent, so it must be $ I $, the identity. It is now easy to see that since we assumed $\W$ is
orthogonally indecomposable with respect to $T$, we must have $dim \; \tau/<v>   =  m - 2  = 1.$ So $dim \; \W = 3.$

\end{proof}

7)   Let now $T$ be a non-semisimple element of $I(Q)$. Let $T = T_sT_u$, $T_u \not= I$ be the Jordan decomposition of $T$.

Let $\lambda$ be a non-real eigenvalue of $T_s$. In 4) we have
seen that $\V_{c, \lambda} + \V_{c, \bar {\lambda}}$ is a
complexification of its trace $\V_{\lambda, \bar {\lambda}}$ on
which $Q$ is negative definite.  Now $T_u$ commutes with $T_s$. So
$T_u$  keeps the eigenspace of $T_s$ in $\V_c$ invariant. It
follows that $T_u$ leaves $\V_{c, \lambda} + \V_{c, \bar
{\lambda}}$, and hence $\V_{\lambda, \bar {\lambda}}$ invariant.
Now $Q$ restricted to ${\V_{\lambda, \bar {\lambda}}}$ is negative
definite. So $O(\V_{\lambda, \bar {\lambda}})$ is compact. A
compact group does not contain a   non-semisimple element. It
follows that ${T_u}$ restricted to ${\V_{\lambda, \bar
{\lambda}}}$ is identity.

Let $-1$ be an eigenvalue of $T_s$. By 5) the corresponding eigenvector is space-like.
It follows that $Q$ restricted to $\V_{-1}$ is negative definite. By the above argument
$T_u$ restricted to $\V_{-1}$ is identity also.

Let $r$ be a (necessarily positive) real eigenvalue $\not= 1$ of $T_s$. By 4), $T$ is hyperbolic. So $T = T_s$ and $T_u = identity$. By our hypothesis, this is not the case.
So this case cannot occur.

So $T_u$ can be different from identity only on the eigenspace $\V_1$ with eigenvalue $1$ of $T_s$. But $T_s$ being semisimple, it must be itself identity on   $\V_1$.

We agree to call such $T$ {\it parabolic}. The characteristic polynomial of such $T$ has the form

$\chi_T(x) = (x-1)^l(x+1)^m \chi_{oT}(x)$, where $\chi_{oT}(x) = \Pi_{j=1}^{s}(x^2 - 2a_jx + 1)^{r_j}$, $|a_j| < 1.$

\noindent Here $a_j$s are distinct. We have $l \ge 3$. The minimal polynomial has the form

$m_T(x) = (x-1)^{l'}(x+1)^{\epsilon}\Pi_{j=1}^{s}(x^2 - 2a_jx + 1)$.

\noindent Here $l' = 3$. Moreover $\epsilon = 0$ if $m = 0$, and $\epsilon = 1$ if $m > 0$.  Let $k = \sum_{j=1}^{s}r_j + [\frac{m}{2}]$.  In a more refined way, we call $T$
  a  {$k$-rotatory parabolic} if $m$ is even, and a {$k$-rotatory parabolic inversion} if $m$ is odd. Notice also that $m_{T_u}(x) = (x-1)^3$.

  This completes the proof.

  \section{Conjugacy Classes}

 If the transformations are conjugate their characteristic and minimal polynomials are the same, as indeed this is true for elements of $GL(\V)$.
The interesting point is the converse.

 Suppose $T$ is elliptic. Then $\V$ is an orthogonal direct sum of $1$- or $2$-dimensional indecomposable $T$-invariant subspaces. The metric on all $2$-dimensional subspaces
 is negative definite. On one $1$-dimensional subspace it is  positive definite, and on the others it is negative definite. Given $T'$ conjugate to $T$, $\V$ is
a similar orthogonal direct sum of $1$- or $2$-dimensional indecomposable $T'$-invariant subspaces. There is an isometry in $O(Q)$ carrying one decomposition into another.
The isometry may be chosen to lie  in $I(Q)$ for both $T, T'$ preserve $\{v \in \V| Q(v) \ge 0\}.$ This isometry  conjugates $T$ into $T'$.

 The proof in the hyperbolic case is similar. In this case $\V$ is an  orthogonal direct sum of one $2$-dimensional subspace on which $Q$ has signature $(1, 1)$.
The rest is similar to the elliptic case.

 Suppose $T$ is parabolic. Then $\V$ is an orthogonal direct sum of one $3$-dimensional

\noindent $T$-indecomposable subspace on which the metric has the signature $(1, 2)$, and the other $1$ or $2$-dimensional $T$-invariant subspaces on which the metric is
negative definite. Now the proof is similar to the elliptic case.

Notice that if we know a transformation is not parabolic then the  characteristic polynomial itself determines the conjugacy class.

\section{Centralizers, and $z$-classes}

1) Let $T$  be an elliptic  transformation. First let us note some invariants of $T$ from a dynamic viewpoint. Let $\Lambda$ be the set of fixed points of $T$.
Then $\Lambda$ is the eigenspace of $T$ with eigenvalue $1$. Now $Q|_{\Lambda}$ is non-degenerate, cf. part 1) of \S 2. Let $\Pi$ be the subspace orthogonal
to $\Lambda$. Then  $\V = \Lambda \oplus \Pi$. The centralizer $Z(T)$ in $I(Q)$ leaves each  eigenspace, or a trace of a complex eigenspace, of $T$ invariant.
So it is clear that $Z(T|_{\Lambda}) =
 I(Q|_{\Lambda})$.

Note that $Q|_{\Pi}$ is negative definite. In $\Pi$ let $\Pi_{-1}$ be the eigenspace with
 eigenvalue $-1$, and $\Pi_o$ be its orthogonal complement. Let

 $\chi_{T|_{\Pi_o}}(x) = \Pi_{j=1}^{s}(x^2 - 2a_jx + 1)^{r_j}$, $|a_j| < 1.$

 \noindent Here $a_j$'s are distinct. Let $\Pi_j = ker \; (T^2 - a_jT + 1)^{r_j}$. Then
 $\Pi = \oplus_{j=1}^{s} \Pi_j$, and

\noindent $dim \; \Pi_j = 2r_j$.

 Let $v$ be a vector in $\Pi_j$, $<v, v>  =  -1$,   and $Tv = v_1$.
 Then $span\; \{v, v_1\} $
 is $T$-invariant. Choose orientation on  $span \; \{v, v_1\} $ so that the angle between
 $v$ and $v_1$ is $\theta$, where $0 < \theta < \pi$. Let $w$ be a vector in $span\; \{v, v_1\} $, $<w, w> \, =  -1$, such that $(v, w)$ is a positively
oriented orthogonal pair in
 $span\; \{v, v_1\} $. Let $\J_j(v) = w, \J_j(w) = -v$. Then $\J_j$ is a $T$-invariant complex structure on $span\; \{v, v_1\} $. We easily see that $\J_j$, in
 $span\; \{v, v_1\} $, is independent of the choice of $v$. Thus this process in fact defines $\J_j$ as a $T$-invariant complex structure on $\Pi_j$.
It follows that the $Z(T|_{\Pi_j})$  is isomorphic to the unitary group $U(r_j)$.

Now  $Z(T|_{\Pi_{-1}})$ is clearly $O(Q|_{\Pi_{-1}})$, and it is compact.

So $Z(T)$ is isomorphic to $I(Q|_ {\Lambda}) \times O(Q|_{\Pi_{-1}}) \times \Pi_{j=1}^{s}U(r_j)$.

\vskip .3in
2) Let $T$ be a hyperbolic transformation. Here we have a unique
$2$-dimensional

\noindent nondegenerate $T$-invariant subspace $\W$, which is the sum of two eigenspaces with

\noindent eigenvalues $\not=\pm 1$, and  on which the metric is of the form $(1, 1).$
$Z(T)$ leaves $\W$ invariant. It is easy to see that $Z(T|_{\W}) =  I_o(Q|_{\W}),$ where $I_o(Q)$ is the identity component of $ I(Q)$. On the orthogonal complement of $\W$,
the metric is negative definite. Now the analysis is similar to the   elliptic case. We have $Z(T)$  isomorphic to

$I_o(Q|_{\W}) \times O(Q|_ {\Lambda}) \times O(Q|_{\Pi_{-1}}) \times \Pi_{j=1}^{s}U(r_j)$.

\noindent Notice that in this case $ O(Q|_ {\Lambda}) $ is necessarily compact.
\vskip .3in

3) Now let $T$ be a parabolic transformation, $T = T_sT_u$. We know that $T_s$ and $T_u$ are polynomials in $T$. So if $A$ commutes with $T$, it commutes with $T_s$
and $T_u$ as well. Conversely if $A$ commutes with $T_s$ and $T_u$ it commutes with
$T$. In other words we have $Z(T) = Z(T_s) \cap Z(T_u).$

We have a decomposition $\V = \W \oplus \U$, where $T_u|_{\U} = identity$,   $dim \; \W = 3$, and $Q|_{\W}$ has signature $(1, 2)$. Also $T_s|_{\W} = identity$.
On $\U$ the metric is negative definite. Decompose $\U$ into $\U_1 \oplus \U_{-1} \oplus \U_2$, where $\U_1$ (resp. $\U_{-1}$) are the eigenspaces with eigenvalue $1$
(resp. $-1$), and $\U_2$ is the orthogonal sum of the real traces of the complex eigen-spaces. Since

\noindent $T_u|_{\U} = identity,$
we have $Z(T|_{\U_{-1} \oplus \U_2}) = Z({T_s}|_{\U_{-1} \oplus \U_2})$ is a direct product of $O(\U_{-1})$ and various unitary groups. This leaves
 $Z(T|_{\W \oplus \U_1}) = Z({T_u}|_{\W \oplus \U_1})$. To compute $Z({T_u}|_{\W \oplus \U_1}) $ it is convenient to use the upper half space model of the hyperbolic
space, where $\infty$ is the fixed point of $T_u$. Let $dim \; \W \oplus \U_1 = n' + 1$. Take the upper half space, which we conveniently again denote by $\H^{n'}$,
with orthogonal coordinates $(x_1, x_2, \ldots , x_{n'})$, with $x_{n'} \ge 0$.
 The $(n'-1)$-planes   $x_{n'} =  c > 0$, are parts of the horo-spheres. We may further assume that the coordinates are so chosen that  $T_u$ is given by
 $(x_1, x_2, \ldots , x_{n'})  \mapsto (x_1 + 1, x_2, x_3, \ldots , x_{n'})$. Now the orthogonal group in coordinates $(x_2, x_3, \ldots, x_{n' - 1})$
commutes with $T_u$, as also  the translations
 in the planes $x_{n'} = c$. The structure of  the group may be described as follows. It is a direct product of two groups.

 i) a copy  of $\R$: Here $\R$ is the group of translations
 $(x_1, x_2, \ldots , x_{n'})  \mapsto (x_1 + c, x_2, x_3, \ldots , x_{n'})$.

ii) a semidirect product of  $ \R^{n' -2}$ by $O(n' - 2)$: Here $ \R^{n' -2}$ is the group of  translations
$(x_1, x_2, \ldots , x_{n'})  \mapsto (x_1, x_2 + c_2, x_3 + c_3, \ldots , x_{n' - 1} + c_{n' -1}, x_{n'}) $, and $O(n' - 2)$ is the orthogonal group
in coordinates $(x_2, x_3, \ldots, x_{n'-1})$.

\vskip .3in

From this description of centralizers we see that to each centralizer there is attached a certain orthogonal decomposition of the space, and its isomorphism
type is determined by it. Given any two such decompositions of
the same type (meaning that the dimensions and the signatures of the quadratic form for the corresponding summands are the same) there exists an element
of $I(Q)$ carrying one decomposition into the other. This element will conjugate one centralizer into the other. Thus there are only finitely many $z$-classes.
Now the centralizers are described in terms of group-structure itself. This fact is interpreted as accounting for the finiteness of  ``dynamical types".

\vskip .3in

The actual count of $z$-classes requires more work, and involves counting certain types of partitions of $n+ 1$.
Let $T$ be in $I(Q)$, $\chi_T(x) = (x-1)^l(x+1)^m \chi_{oT}(x)$ be its characteristic polynomial, where
$$\chi_{oT}(x) = \Pi_{j=1}^{s}(x^2 - 2a_jx + 1)^{r_j}, $$
$$\hbox{or},\;\;\chi_{oT}(x) = (x-r) (x- r^{-1})\Pi_{j=1}^{s}(x^2 - 2a_jx + 1)^{r_j},$$
$|a_j| <1$, and distinct. We associate to $T$
the partition $n + 1 = l + m + \sum_{j=1}^{s} r_j$, or
  $n + 1 = 2+ l + m + \sum_{j=1}^{s} r_j$. Notice that knowing that $T$ is elliptic, resp. hyperbolic, resp. parabolic, and the partition of $n + 1$
determine its $z$-class uniquely. It is easy to see that distinct partitions (and knowing that $T$ is elliptic, resp. hyperbolic, resp. parabolic)
are associated to distinct $z$-classes.  In considering $z$-classes we consider identity as elliptic. There are some restrictions on which partitions can actually occur.

First observe that in any group the center forms a single $z$-class.

\begin{proposition}{The center of $I(Q)$ is trivial.}
\end{proposition}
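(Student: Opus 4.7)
The plan is to show that any central $T \in I(Q)$ must act as a scalar on $\V$, and then to rule out $T = -I$.

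First I would observe that for every spacelike vector $v \in \V$, the reflection $\rho_v : w \mapsto w - 2\frac{\langle v,w\rangle}{\langle v,v\rangle} v$ across the hyperplane $v^\perp$ lies in $I(Q)$. The only non-trivial point is that $\rho_v$ preserves the specific component of the hyperboloid $\{Q=1\}$ identified with $\H^n$, not merely the full hyperboloid; this follows from the identity $\langle \rho_v u, u\rangle = Q(u) - 2 Q(u_\parallel) \ge Q(u) > 0$ for every timelike $u$ (since $u_\parallel \in \R v$ is spacelike and hence $Q(u_\parallel) \le 0$), combined with the standard criterion that two timelike vectors lie on the same sheet iff their inner product is positive. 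Centrality of $T$ then gives $T \rho_v = \rho_v T$, so $T$ preserves each eigenspace of $\rho_v$; in particular $T(\R v) \subseteq \R v$, and since $T$ is an isometry with $Q(v) \ne 0$ we conclude $Tv = \pm v$.

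Next, let $E_\pm = \{w \in \V : Tw = \pm w\}$. These are closed subspaces meeting only at $0$, and the preceding step shows that every spacelike vector lies in $E_+ \cup E_-$. The spacelike cone $\mathcal{C} = \{v \in \V : Q(v) < 0\} \setminus \{0\}$ is connected for $n \ge 2$ (and for $n = 1$ its two components are interchanged by $v \mapsto -v$, which preserves the resulting sign because $T(-v) = -Tv$), so a locally-constant argument forces a single $\lambda \in \{\pm 1\}$ with $Tv = \lambda v$ for all $v \in \mathcal{C}$. Since $\mathcal{C}$ spans $\V$, this gives $T = \lambda I$. Finally $-I$ interchanges the two sheets of $\{Q = 1\}$, so $-I \notin I(Q)$; hence $\lambda = 1$ and $T = I$.

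The main item requiring care is the first step, namely verifying that $\rho_v$ actually lies in $I(Q)$ rather than merely in $O(Q)$; once that inclusion is established, the rest is a direct consequence of linear algebra together with the connectedness and spanning properties of the spacelike cone.
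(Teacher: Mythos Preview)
Your argument is correct, and it takes a genuinely different route from the paper's proof. The paper works in the disk model: a central element $g$ must have a fixed point in the closed disk (by Brouwer, or by the classification in Theorem~1.1); the fixed-point set is invariant under all of $I(Q)$, and since $I(Q)$ acts transitively both on the open disk and on the boundary sphere, $g$ must fix everything and hence be the identity.

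Your approach, by contrast, stays entirely within the linear model and avoids any appeal to transitivity or to fixed-point theorems. Commuting with all spacelike reflections forces $T$ to preserve every spacelike line, and then connectedness of the spacelike cone (together with the elementary observation that it spans $\V$) pins $T$ down as $\pm I$. This is more self-contained from the linear-algebraic viewpoint adopted in the body of the paper, and it has the pleasant feature of working uniformly without switching models. The paper's argument, on the other hand, is shorter and more geometric, and it generalizes immediately to any group acting transitively on a compact convex body and on its boundary. Both proofs ultimately exploit that $I(Q)$ is ``large enough'' --- yours via the abundance of reflections, the paper's via transitivity --- but the mechanisms are quite different.

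One small cosmetic remark: writing $\mathcal{C} = \{v : Q(v) < 0\} \setminus \{0\}$ is redundant since $Q(0)=0$, and ``closed subspaces'' is automatic in finite dimensions; neither affects correctness.
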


\begin{proof}   Let $g$ be an element of the center. In the disk-model we see that $g$ must have a fixed point. The fixed point set of a central element
is left invariant by the whole group. Suppose $g$ has a fixed point in the open disk. Now $I(Q)$ is transitive on the open disk, so $g$ must fix all the points.
In other words $g$ must be identity. Now suppose $g$ has a fixed point on the boundary. But $I(Q)$ is transitive also on the boundary.
So $g$ must fix all the boundary. But then $g$ fixes the convex hull of the boundary. So again $g$ has to be identity.

\end{proof}

{\it Case 1}: Let $T$ be an elliptic element.  Let $n + 1 = l + m + 2\sum_{j=1}^{s} r_j$ be the associated partition.  We have $l \ge 1$ and $ |a_j| < 1$.
Let $k'$ be the number of rotation angles lying in
$(0, \pi)$, $k' = \sum_{j=1}^{s} r_j$. We assume, as we may,  that  the rotation angles are ordered in increasing order. The rotation angle $\theta_j$ is repeated
as many times as its multiplicity $r_j$.   The contribution of this part to the centralizer is the product of unitary groups $U(r_j)$. Thus there are  $p(k')$
choices of distinct rotation angles counting with multiplicities.
Since $l \ge 1$, we have   $ [\frac{n}{2}] + 1$  choices for $k'$ (including 0). For a fixed $k'$,
there are $n + 1  - 2 k'$ choices for $l$. Thus setting $p(0) = 1$,   there are
$(n + 1) p(0) +  (n -1) p(1) + (n - 3) p(2) +  \ldots +  \epsilon p([\frac{n}{2}])$
where $\epsilon = 1$ or $2$ according as $n$ is even or odd,
\noindent $z$-classes of elliptic elements.

 {\it Case 2}: Let $T$ be a  hyperbolic element.  Rewrite the associated partition of $n + 1$ as  $n + 1 = 2 + l + m + 2\sum_{j=1}^{s} r_j$,
where the first $``2"$ stands for a pair of real eigenvalues $\not=\pm 1$, and
 $l$ (resp. $m$) is the multiplicity of the eigenvalue $1$ (resp. $-1$), and $|a_j| < 1$. In this case we see that if $l$ and $m$ are interchanged
we get the same $z$-class. So for counting $z$-classes we may further assume $l \le m$. Thus we have
 $ [\frac{n-1}{2}] + 1$  choices for $k'$ (including 0), and for a fixed $k'$,
there are $[\frac{n - 1 - 2k'}{2}] + 1$ choices for $l$ (including 0).
Thus we have (setting $p(0) = 1$)
 $([\frac{n-1}{2}] + 1) p(0) +  ([\frac{n-1}{2}])  p(1) +  ([\frac{n-1}{2}] - 1)  p(2)  \ldots +   p([\frac{n-1}{2}])$
\noindent $z$-classes of hyperbolic  elements.

{\it Case 3}:  Let $T$ be a  parabolic element. Rewrite the associated partition of $n + 1$ as  $n + 1 = 3 + l + m + 2\sum_{j=1}^{s} r_j$,
where $3 + l$  (resp. $m$) is the multiplicity of the eigenvalue $1$ (resp. $-1$). Now there are $[\frac{n-2}{2}] + 1$ choices for $k'$ (including 0),
and for a fixed $k'$,  we have $n - 1 - 2k'$ choices for $l$ (including $0$).
Thus we have (setting $p(0) = 1$)  $(n-1) p(0) +  (n-3)  p(1) +    \ldots +  \epsilon p([\frac{n-2}{2}])$
where $\epsilon = 1$ or $2$ according as $n$ is even or odd,
 $z$-classes of parabolic  elements.

We have completely proved theorem 1.3.

Lastly note that each $z$-class is a manifold, so one can talk about its dimension for each of its components. A $z$-class may not be connected but there is
actually a single dimension associated to  a $z$-class. It can be read from theorem 2.1 of \cite{K}. From that description we see that in a semisimple
Lie group, the $z$-class is generic, i.e. the dimension of a $z$-class is maximal, iff the centralizer is abelian. The stated $z$-classes in
theorem 1.4 are precisely the  $z$-classes with this property.  This completes
the proof of \thmref{generic}.

 \section{Some simple Criteria}

 In this section we note some simple criteria for detecting the type of   isometries of the
 hyperbolic $n$-space, using the linear model. Let $T$ be in $I(Q)$, and let $\chi_{o T}(x)$ be
 its reduced characteristic polynomial, that is the polynomial obtained after factoring all factors of the type $x\pm 1$.
 Note that the factors $x^2 - 2ax + 1$ for $|a| < 1$
 always takes positive values for real $x$. The factors $(x-r)(x-r^{-1})$ for $r > 1$ are present only in the case of hyperbolic transformation.
The criteria involve only detecting
 the factors $x \pm 1$ of the characteristic or minimal polynomial of $T$. So they are defined over any subfield of $\R$ which contains the
coefficients of the characteristic polynomial of $T$, in particular on any subfield of $\R$ generated by the coefficients of the matrix of $T$
with respect to a suitable basis.

 $\bullet$ $T$ is hyperbolic iff $\chi_{o T}(1) < 0$.

If $T$ is  parabolic then the minimal  polynomial $m_{ T}(x)$ must be divisible by $(x-1)^3$. On the other hand, an elliptic transformation is semisimple.

$\bullet$ $T$ is parabolic (resp. elliptic), iff $\chi_{o T}(1) > 0$, and $m_{ T}(x)$ is  divisible (resp. is not divisible),  by  $(x-1)^2$.

$\bullet$ Note that the above two statements provide a {\it finite algorithm} to test whether $T$ is hyperbolic, parabolic or elliptic.

\medskip Some other simple criteria for recognizing the type can be obtained by just looking at the trace. Note that for elliptic and parabolic transformations
all eigenvalues have absolute value  1. So

$ \bullet$ If $trace \,T > n + 1$, then $T$ is hyperbolic.

The eigenvalues of $T^u$ are $u$-th powers of the eigenvalues of $T$. Only the hyperbolic transformation has eigenvalue $r > 1$, and its multiplicity is $1$. So for large
$u$ the eigenvalue $r^u$ of $T^u$ will dominate the others. So

$ \bullet$ $T$ is hyperbolic iff for all sufficiently large $u$, $trace \, T^u > n + 1$.

\begin{proof}
Let $T$ be hyperbolic. Then it has a
pair of real eigenvalues $\{ \lambda, \lambda^{-1} \}$,
$\lambda>1$. All other eigenvalues have absolute value $1$.
Let the other eigenvalues be $ u_1,....,u_{n-1}$. So we have,
$$trace\;T= \lambda + \lambda^{-1}+ \sum_{i=1}^{n-1} u_i.$$
$$trace\;T^d=\lambda^d+\lambda^{-d}+\sum_{i=1}^{n-1} {u_i}^d.$$
As $\lambda>1$ and $|u_i|=1$ for $i=1,2,..,n-1$, we have for any
 $d \in {\mathbb N}$,
$$|trace\;T^d|=
|\lambda^d+{\lambda^{-d}}+u_1^d+...+ u_{n-1}^d|\geq \lambda ^d-(n-1).$$
Since $\lambda>1$, we can find an $u\in {\mathbb N}$
such that $\lambda^u > 2n$. i.e. $trace\;T^u>n+1$.
 On the other hand, if $T$ is parabolic or elliptic, then $|u_i|=1$ for
$i=1,2,...,n+1$. Hence for any $d$,
$$|trace\;T^d|=\big|\sum_{i=1}^{n+1} u_i^d\big| \leq n+1.$$
\end{proof}
For the cases $n = 2, 3$ we have  very neat criteria. Compare the usual criteria given in the $PSL(2, \R)$ model of $I_o(\H^2)$, or the $PSL(2, \C)$ model of $I_o(\H^3)$ cf. \cite{beardon}, \cite{ratcliffe}, or appendix A below. Note that the orientation-reversing case, either for $n =2$ or for $n=3$,  is usually not treated in the literarature.

$\bullet$ Let $n = 2$.

i) orientation-preserving case: $T$ is hyperbolic, resp. parabolic, resp. elliptic, iff
$trace \, T > 3$, resp.  $ trace \, T = 3$, resp. $ trace \, T < 3$.

ii) orientation-reversing case: $T$ is hyperbolic, resp.   elliptic, iff
$trace \, T > 1$, resp. $trace \, T  = 1$.

$\bullet$ Let $n = 3$.

Let $T$ be in $I(Q)$, $T \not= identity$.

i) orientation-preserving case: a)    $T$ is $1$-rotatory hyperbolic iff   $1$ is not an eigenvalue. $\;\;\;$ b) $T$ is $0$-rotatory hyperbolic, iff 1 is an eigenvalue and $trace \, T > 4$.     c) If $1$ is an eigenvalue then $T$ is elliptic (resp. parabolic) iff $trace \; T < 4$, (resp. $trace\; T = 4$.)

ii)  orientation-reversing case: $T$ is hyperbolic,   resp. elliptic, iff
$trace \, T > 2$, resp.  $ trace \, T \le  2$.

In these dimensions, by remarkable isomorphisms, the group $I(\H^2)$, resp. $I_o(\H^3)$ can be identified with the group $PGL(2,\R)$, resp. $PGL(2, \C)$. In the appendix A we give algebraic criteria using these isomorphisms. We have treated the orientation-reversing case also.

$\bullet$ In higher dimensions although it is hard to compute individual eigenvalues of $T$, the
$trace\; T$ and $trace\; T^u$ are easily computable by using
the well-known Newton's identities.

Let $\chi_T(x)=x^{n+1}-a_1 x^n + a_2 x^{n-1} +.....+ (-1)^{n+1} a_{n+1}$.

In our case, $a_{n+1}$ is $+1$ or $-1$ according as $T$ is orientation
preserving or orientation reversing.

Let $p_k$ be the sum of $k$-th powers of eigenvalues of $T$. Since the eigenvalues of $T^k$ are the $k$-th powers of eigenvalues of $T$, we have $p_k= trace \; T^k$.
The Newton's identities (cf. \cite{mcd}, \cite{jkv})
express $p_k$'s in terms of the coefficients
of the characteristic polynomial of $T$ as follows:

1) For $k \leq n+1$, $p_k=det \begin{pmatrix} a_1 & 1 & 0 & 0 &... & 0\\
2a_2 & a_1 & 1 & 0 & ... & 0 \\ 3a_3 & a_2 & a_1 & 1 & ... & 0 \\
... & ... & ...& ...& ...&...\\ ka_k & a_{k-1} & a_{k-2} & ...&... & a_1 \end{pmatrix}$.

2) For $k>n+1$, $p_k=a_1 p_{k-1} - a_2 p_{k-2} + ... + (-1)^{n-1} p_{k-n}$.

\medskip \noindent So we have,

$trace\; T=a_1$,

$trace\;T^2=p_2= a_1^2-2a_2$,

$trace \;T^3=p_3=a_1^3-3a_1a_2+3a_3$,

$trace\;T^4=p_4=a_1^4-4a_1^2a_2+4a_1a_3+2a_2^2-4a_4$, ....

\noindent If either of these numbers is $>n+1$, then $T$ is hyperbolic.

$\bullet$  Suppose $T$ is hyperbolic and has eigenvalue $\lambda$ such that
$\lambda \geq a >1$. Then for any real $r>0$, and  $m>\frac{ln\;(n-1+r)}{ln\;a}$ we have  $trace\; T^m>r$.

 \begin{proof}
We have for any positive integer $k$,
\begin{equation}\label{eq}
 trace \; T^k \geq \lambda^k-(n-1)> a^k-(n-1)
\end{equation}

\noindent
Since $a>1$, choose $m$ such that
$a^m-(n-1)>r$, or,  $m> \frac{ln\; (n-1+r)}{ln\;a}$.
\end{proof}

{\corollary
An isometry $T$ of $\H^n$ is hyperbolic if and only if
the sequence $\{trace\; T^k\}$ is divergent.}

\begin{appendix}
\section{Alternative criteria for $I(\H^2)$ and $I(\H^3)$, a Lie group theoretic and dynamic perspective}
In low dimensions there are two remarkable Lie theoretic isomorphisms. The group $SO_o(2, 1)$ is isomorphic to $PSL(2, \R)$ and $SO_o(3, 1)$ is isomorphic to $PGL(2, \C)$. These allow us to identify $I_o(\H^2)$ and $I_o(\H^3)$ with the groups $PSL(2, \R)$ and $PGL(2, \C)$ respectively.
\subsection{Isometries of $\H^3$}
Consider the disk model of the hyperbolic $3$-space, and identify its conformal boundary with the extended complex plane $\hat \C=\C \cup \{\infty\}$. The group $I(\H^3)$ can be identified with the group
$\m(2)$ of all M$\o$bius transformations of $\hat \C$. Recall that $\m(2)=\m^+(2) \cup \m^-(2)$, where
$$\m^+(2) = \{z \mapsto \frac{az+b}{cz+d}\;|\;a,b,c,d \in \C,\;ad-bc
  \neq 0 \},$$
$$\m^-(2) = \{z \mapsto \frac{a \bar z+b}{c \bar z+d}\;|\;a,b,c,d \in
  \C,\;ad-bc \neq 0 \}.$$
In particular, $I_o(\H^3)$ is isomorphic to $PGL(2, \C)$. Let $\sigma_0$ be the reflection in the real line in $\C$, i.e. $\sigma_0(z)=\bar z$.  The coset $PGL(2, \C)\sigma_0$ is isomorphic to the group of orientation-reversing isometries of $\H^3$.
Note that we consider $PGL(2, \C)$ rather than the usual $PSL(2, \C)$ in the literature. In general over fields $\F \neq \C$, the groups $PGL(2, \F)$ and $PSL(2, \F)$ are not always isomorphic. So we find it natural to consider the group $PGL(2, \C)$ and to avoid the ``det = 1" normalization for formulating the criteria.

Before proceeding further we introduce a new terminology of ``M$\o$bius co-ordinates". The coordinate $z$ in $\C$, extended to $\hat \C$ by setting $z(\infty) = \infty$, is called a {\it M$\o$bius coordinate} on $\hat \C$. It is {\it not} a complex coordinate in the sense the terminology  is used in the theory of manifolds. Any $PGL(2, \C)$-translate will also be termed
 as a M$\o$bius coordinate.

\subsection{Isometries of $\H^2$}
In the upper-half space model, $I(\H^2)$ is a subgroup of $\m(2)$ and is given by
the linear fractional transformations with real co-efficients, i.e.
$I(\H^2)=\m^+(1) \cup \m^-(1)$, where
$$\m^+(1) = \{z \mapsto \frac{az+b}{cz+d}\;|\;a,b,c,d \in \R,\;ad-bc > 0 \},$$
$$\m^-(1) = \{z \mapsto \frac{a \bar z+b}{c \bar z+d}\;|\;a,b,c,d \in
  \R,\;ad-bc < 0 \}.$$
Thus $I(\H^2)$ can be identified with the group $PGL(2, \R)$. Let $GL_+(2, \R)$, resp. $GL_-(2, \R)$, be the subgroup of all elements with positive, resp. negative,  determinant in $GL(2, \R)$. Then the group $I_o(\H^2)$ is isomorphic to $PGL_+(2, \R)$, which can be identified with $\m^+(1)$. The other component of the isometry group may be identified with $GL_-(2, \R)$.

In the disk model, the group $I_o(\H^2)$ consists of the usual holomorphic transformations of the unit disk $\D^2$. Recall that such a transformation is of the form
$$f_{a, \phi}: z \mapsto e^{i \phi} \frac{z-a}{1-\bar a z},\; 0 \leq \phi \leq 2 \pi,\; |a|<1.$$
The orientation-reversing isometries are given by
$$\bar f_{a, \phi}: z \mapsto  e^{i \phi}\frac{\bar z-a}{1-\bar a \bar z},\; 0 \leq \phi \leq 2 \pi,\; |a|<1.$$
Let $\R_+$ denote the group of all positive reals, and let
$$SU(1,1)=\bigg \{\begin{pmatrix} a & \bar c \\ c & \bar a\end{pmatrix}\;|\; |a|^2-|c|^2=1\bigg \}.$$
Then it follows from the above that $I_o(\H^2)$ lifts to the subgroup
$\R_+ \times SU(1,1)$ of $GL(2, \C)$.

Observe that the co-efficient matrix $M$ of $f_{a, \phi}$ has the following decomposition:
$$M=\begin{pmatrix}e^{i \frac{\phi}{2}} & 0 \\ 0 & e^{-i \frac{\phi}{2}}\end{pmatrix}
\begin{pmatrix}1 & -a \\ -\bar a & 1 \end{pmatrix}.$$
It turns out to be the polar decomposition, or the KP decomposition, of $f_{a, \phi}$.

\subsection{The criteria} First note that the quotient map $GL(2, \C) \to PGL(2, \C)$ is surjective, and it maps the conjugacy class of an element $A$ onto the conjugacy class of the corresponding element $\hat A$. Since the trace and determinant of $A$ are the conjugacy invariants in $GL(2, \C)$, we can take $\frac{(trace\;A)^2}{det\;A}$  as the conjugacy invariant for $\hat A$ in $PGL(2, \C)$.

Now suppose $A$ is an element in $GL(2, \C)$ and let $\hat A$ be the corresponding M$\o$bius transformation on $\hat \C$. Let $A=\begin{pmatrix} a & b \\ c & d \end{pmatrix}$. Then
$$\hat A(z)=\frac{az+b}{cz+d}.$$
By Jordan theory we know the conjugacy classes in $GL(2, \C)$.

{\it Case} (i). Suppose $A$ is conjugate to the diagonal matrix
$D_{\lambda, \mu}= \begin{pmatrix} \lambda & 0 \\0 & \mu \end{pmatrix}$, i.e. there exists $P$ in $GL(2, \C)$ such that
$D_{\lambda, \mu}=PAP^{-1}$. Considering $\hat P: z \mapsto w$ as a M$\o$bius change of co-ordinates on $\hat \C$, we have
$$\hat A(w)=  \frac{\lambda}{\mu} w.$$
Further note that $D_{\lambda, \mu}$ is conjugate to $D_{\mu,
\lambda}$. So the size $|\frac{\lambda}{\mu}|$ has no significance
for the conjugacy problem.

Now there are the following possibilities.

(a) $  \frac{\lambda}{\mu}$ is not a real number and $|  \frac{\lambda}{\mu}| \neq 1$. then $A$ acts as an $1$-rotatory hyperbolic. Classically these are known as \emph{loxodromic}.

(b) If $|  \frac{\lambda}{\mu}|=1$ and $\frac{\lambda}{\mu} \neq 1$, then $A$ acts as an $1$-rotatory elliptic.

If $  \frac{\lambda}{\mu} = -1$, then $A$ acts as an $1$-rotatory elliptic with rotation angle $\pi$, and we agree to call it a half-turn.

(c) $  \frac{\lambda}{\mu}$ is a real number. Suppose $|  \frac{\lambda}{\mu}|\neq 1$.

If $  \frac{\lambda}{\mu} >0$, then $A$ acts as a $0$-rotatory hyperbolic. We agree to call it a stretch.

If $  \frac{\lambda}{\mu} <0$, then $A$ acts as an $1$-rotatory hyperbolic with rotation angle $\pi$, and in this case we agree to call it a stretch half-turn.

 {\it Case }(ii). Suppose $A$ is conjugate to the upper-triangular matrix $T_{\lambda}=\begin{pmatrix}\lambda & 1 \\ 0 & \lambda \end{pmatrix}$. In this case $\hat A$ takes the form $w \mapsto w + \frac{1}{\lambda}$, and we see that $A$ acts as a $0$-rotatory parabolic. We agree to call it a translation.

\medskip In the orientation-reversing case, lifting $f: z \mapsto \frac{a \bar z + b}{c \bar z +d}$
to $A$ has no dynamic significance for $I(\H^3)$, for the
``correct"  lift is $A\sigma_0$. Hence the conjugacy classes in
$GL(2, \C)$ do not determine the conjugacy classes of the
orientation-reversing isometries. However one dynamic invariant of
$f$ is $f^2$, which corresponds to $A \sigma_0 A \sigma_0$, and is
an orientation-preserving transformation. The associated matrix to
$f^2$ is given by $B=A \bar A$. As we shall see in the next
theorem, when $B \neq \lambda I$, $\lambda>0$,  $B$ determines
$A$. The elusive case is when $B=\lambda I$. It will turn out that
in this case there are exactly two conjugacy classes.  Either $f$
is a $0$-rotatory elliptic inversion, and the fixed point set of
$f$ is a circle, or $f$ is an $1$-rotatory elliptic inversion, and
$f$ is an {\it antipodal} map without fixed points. An
\emph{antipodal map}, by definition, is an orientation-reversing
map of order $2$ and without a fixed point on $\hat \C$. It turns
out that these maps are conjugate to $\mathfrak a: z \mapsto
-\frac{1}{\bar z}$.  In the disk model of $\H^3$, these maps fix a
unique point in the disk, and interchange the end points of a
geodesic passing throgh the fixed-point.

\subsubsection{Criteria for $I(\H^3)$}
\begin{theorem}\label{ach3}  Let $f$ be an isometry of $\H^3$ induced
by  $A \in GL(2, \C)$. Define
$$c(A)=\frac{(trace \;A)^2}{det \;A}.$$

I.  Suppose $f$ is orientation-preserving.

 $(i)$  If $c(A)$ is in $\C-\R$, then $f$ is an
$1$-rotatory hyperbolic, and is different from a stretch half-turn.

 $(ii)$ If $c(A)$ is in $\R$, $|c(A)-2|>2$, then
$f$ is a stretch or a stretch half-turn  according as $c(A) >0$ or
$c(A) <0$.

 $(iii)$ If $c(A)$ is in $\R-\{0\}$, $|c(A)-2| <2$, then
$f$ is an $1$-rotatory elliptic.

$(iv)$ If $c(A)=0$, then $f$ is a half-turn.

 $(v)$ If $c(A)$ is in $\R-\{0\}$, $|c(A)-2|=2$,
$A \neq \lambda I$, then $f$ is a translation.

$(vi)$  If $c(A)$ is in $\R-\{0\}$, $|c(A)-2|=2$, $A=\lambda I$,
then $f$ is the identity.

\medskip II. Suppose $f$ is orientation-reversing.  Let
$B=A \bar A$. Then $c(B)$ is a real positive number.

 $(i)$ If $|c(B)-2|<2$, then $f$ is an $1$-rotatory elliptic inversion.

 $(i)$ If $|c(B)-2|>2$, then $f$ is a $0$-rotatory hyperbolic inversion.

$(iii)$ If $|c(B)-2|=2$ and $B \neq \lambda I$, then $f$ is a $0$-rotatory parabolic
inversion.

$(iv)$ If $|c(B)-2|=2$ and $B=\lambda I$, then $f$ acts as an
inversion in a circle, or an antipodal map. In both cases $f$ is an elliptic inversion. We detect these cases as follows.

If there exists a non-zero $u$ in $\C$, unique up to a non-zero
real multiple, such that the matrix $uA$ is of the form:
$\begin{pmatrix}a & b \\
c & -\bar a \end{pmatrix}$,
 where $b$, $c$ are reals, then $f$ acts as an
inversion in a circle, resp. antipodal map according as  $\det uA<0$, resp.  $\det uA>0$.

The inversion in a circle is a $0$-rotatory elliptic-inversion and the antipodal map is a $1$-rotatory elliptic inversion of $\H^3$.
\end{theorem}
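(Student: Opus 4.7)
The overarching plan is to exploit the invariant $c(A) = (\operatorname{tr} A)^2/\det A$, which is scale-invariant under $A \mapsto \lambda A$ and hence descends to a function on $PGL(2,\C)$, together with the Jordan classification of conjugacy classes in $GL(2,\C)$ recalled immediately before the theorem. The invariant $c$ distinguishes the semisimple conjugacy classes but collapses the non-identity unipotent class into the identity, which is the reason cases (v) and (vi) of Part I must be separated by hand.

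For Part I, the central identity is that if $A$ is semisimple with eigenvalue ratio $\tau = \lambda/\mu$, then $c(A) - 2 = \tau + \tau^{-1}$; writing $\tau = r e^{i\theta}$ yields
\[
c(A) - 2 = (r + r^{-1})\cos\theta + i (r - r^{-1})\sin\theta .
\]
The imaginary part vanishes precisely when $r = 1$ or $\sin\theta = 0$, so $c(A) \notin \R$ forces $\tau$ to be neither real nor of modulus $1$, giving the loxodromic regime (case i). Among real values, $|c(A) - 2| \le 2$ is attained only on $|\tau| = 1$, with the endpoints $c(A) = 0$ and $c(A) = 4$ corresponding to $\tau = -1$ and $\tau = 1$; this handles cases (iii) and (iv) and isolates the boundary where $\lambda = \mu$. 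The complement $|c(A) - 2| > 2$ forces $\tau$ real with $|\tau|\ne 1$, and the sign of $c(A)$ then separates stretches ($\tau > 0$) from stretch half-turns ($\tau < 0$), yielding case (ii). The non-semisimple Jordan block satisfies $c(A) = (2\lambda)^2/\lambda^2 = 4$ with $A$ not scalar, which is case (v); the scalar case is (vi).

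For Part II, write $f = A\sigma_0$; the identity $\sigma_0 A \sigma_0 = \bar A$ of M\"obius transformations gives $f^2 = A\bar A = B$ in $PGL(2,\C)$. A direct entrywise computation yields $\operatorname{tr}(A\bar A) = |a|^2 + |d|^2 + 2\Re(b\bar c) \in \R$ and $\det(A\bar A) = |\det A|^2 > 0$, so $c(B)$ is real and non-negative. Applying Part I to $B$ pins down the conjugacy class of $f^2$, and the type of $f$ itself is then extracted using the fact that $f$ commutes with $f^2$ and so preserves the fixed set of $f^2$ in $\H^3 \cup \partial\H^3$. In each of cases (i)-(iii) this fixed set is either the axis of an elliptic or hyperbolic transformation or the parabolic fixed point, and $f$ either acts trivially on it---which would force $f^2 = \mathrm{id}$---or acts in the unique orientation-reversing way: in case (i) $f$ reverses the elliptic axis and fixes its midpoint $p$, at which the linearization is a rotary reflection whose square is the nontrivial normal rotation of $f^2$, so $f$ is a $1$-rotatory elliptic inversion; in case (ii) $f$ translates along the hyperbolic axis while reflecting on the normal plane, giving a $0$-rotatory hyperbolic inversion; in case (iii) the analogue on horospheres produces a $0$-rotatory parabolic inversion.

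The genuine obstacle is case (iv), where $B = \lambda I$ collapses $c(B)$ to the uninformative value $4$ and leaves the reflection in a circle and the antipodal map as two distinct involutions sharing the same invariant. To separate them I would use the relation $A\bar A = \lambda I$ directly, looking for $u \in \C^\times$ that puts $uA$ into the normal form $\begin{pmatrix} a & b \\ c & -\bar a\end{pmatrix}$ with $b, c \in \R$. Existence reduces to $|a|=|d|$ plus a phase condition, both of which are forced by $A\bar A = \lambda I$ with $\lambda$ real; the remaining freedom in $u$ is exactly a real scalar. A two-line multiplication then gives $(uA)\overline{uA} = (|a|^2 + bc)I = -\det(uA)\,I$, which equals $|u|^2 \lambda I$, so $\det(uA)$ and $\lambda$ have opposite signs. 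Checking the two models $z \mapsto \bar z$ ($u$ pure imaginary, $\det(uA) < 0$, fixed circle on $\hat{\C}$) and $z \mapsto -1/\bar z$ ($u$ real, $\det(uA) > 0$, no boundary fixed point) fixes the dictionary, and cross-referencing the linear-model eigenvalue data $(x-1)^3(x+1)$ versus $(x-1)(x+1)^3$ with Theorem 1.1 confirms that the former is the $0$-rotatory and the latter the $1$-rotatory elliptic inversion.
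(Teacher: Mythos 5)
Your proposal follows the paper's proof essentially verbatim: Part I rests on the same identity $c(A)-2=\tau+\tau^{-1}$ with $\tau=re^{i\theta}$ and the same case analysis on $r$ and $\theta$; Part II passes to $f^2=A\bar A$ and applies Part I; and the degenerate case $B=\lambda I$ is handled by normalizing $uA$ to the form $\begin{pmatrix} a & b\\ c & -\bar a\end{pmatrix}$ with $b,c$ real. The only (harmless) deviation is in that last subcase, where the paper decides between inversion and antipodal map by explicitly solving the fixed-point equation $f(z)=z$ and checking whether the resulting circle is real, while you instead use $(uA)\overline{(uA)}=-\det(uA)\,I=|u|^{2}\lambda I$ together with the two model involutions $z\mapsto\bar z$ and $z\mapsto -1/\bar z$ and the classification of orientation-reversing involutions from Theorems 1.1--1.2; both routes are correct.
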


\begin{proof}
$I$.   Let the matrix $A$ induce an orientation-preserving isometry.  Let $\lambda$ and $\mu$ be eigenvalues of $A$. So,
$$c(A)=\frac{(trace\;A)^2}{det\;A}=\frac{(\lambda+\mu)^2}{\lambda
  \mu}={\frac{\lambda}{\mu}}+{\frac{\mu}{\lambda}}+2.$$
 Let $\frac{\lambda}{\mu}=re^{i \theta}$, i.e. $c(A)=re^{i \theta}+{\frac{1}{r}}e^{-i \theta}+2$.

(1) If $c(A)$ is  non-real, then the imaginary part of
 $re^{i \theta}+{\frac{1}{r}}e^{-i \theta}$ must be non-zero.  This
is possible if and only if $r \neq 1$ and $\theta \neq 0, \pi$. Thus $A$ acts as a $1$-rotatory hyperbolic and  is different from a stretch half-turn.

(2) Now suppose $c(A)$ is a real number. Then either $\sin \theta=0$, or $r=1$. So $\theta=0$, or $\pi$, or, $r=1$.

 If $|c(A)-2|<2$, then we must have $r=1$, $\theta \neq 0,\pi$. Hence $A$ acts as an $1$-rotatory elliptic, different from a half-turn.

 If $|c(A)-2|>2$, then we must
have $\theta=0\;\hbox{or}\;\pi$, $r \neq 1$. Thus $A$
acts as a stretch or a stretch half-turn according as  $\theta =0$ or $\theta=\pi$.

 If $|c(A)-2|=2$, then we must have
$r=1$, $\theta=0$ or $\pi$. If $\theta=\pi$, we have $\frac{\lambda}{\mu}=-1$, i.e. $A$ acts as a half-turn. If $\theta=0$ we have
$\lambda=\mu$.
 If $A \neq \lambda I$, then $A$ is
upper triangulable, hence $A$ acts as a translation.
 If $A=\lambda I$, it induces the identity map.

\smallskip $II$.  Let $f$ be an orientation-reversing isometry and the corresponding M$\o$bius transformation, again denoted by $f$, is given by
$$f: z \mapsto \frac{a\bar z+ b}{c \bar z+d}.$$
Then $A\sigma_0A\sigma_0=A \bar A$ induces the
  orientation-preserving M$\ddot{\hbox{o}}$bius transformation $f^2$ on $\hat
  \C$, where
$$\sigma_0 A \sigma_0=\bar A=\begin{pmatrix} \bar a & \bar b \\
\bar c & \bar d \end{pmatrix}.$$
Let $B=A \bar A$. Then by the previous case we have

(3) $|c(B)-2|<2 \Rightarrow$ $f^2$ is a
$1$-rotatory elliptic $\Rightarrow$ $f$ is a $1$-rotatory elliptic inversion.

(4) $|c(B)-2|>2 \Rightarrow f^2$ is a
stretch $\Rightarrow$ $f$ is a $0$-rotatory hyperbolic inversion.

(5) $|c(B)-2|=2,\;B \neq \lambda
I$. Then $f^2$ is a translation, and hence
$f$ is a $0$-rotatory parabolic inversion.

(6) Finally, consider the case when $|c(B)-2|=2,\;B = \lambda I$.
Note that
$$trace\;B=|a|^2+|d|^2+(b \bar c+\bar b c)=2 \lambda,\;
\hbox{det} \;B=|ad-bc|^2.$$
Thus $c(B)$ and $\lambda$ are real numbers. Since $\lambda=|a|^2+b \bar
c=|d|^2+\bar b c$, we must have $b \bar c \in \R$.

Let  $b \bar c \neq 0$. Then $b$ must be a real multiple of $c$. Let
 $b=uc=ure^{i \theta},\;u \in \R$.
 Multiplying $A$ by $e^{-i \theta}$, without loss of
generality, we may assume $A=\begin{pmatrix}a & b \\
c & d \end{pmatrix},\;b,c \in \R$.
The equation $B=A \bar A=\lambda I$ also gives us,  $c \bar a+d \bar
c=0$. Since $c$ is a real, we have $\bar a+d=0$, i.e. $\bar a=-d$.
Hence,  $A=\begin{pmatrix}a & b\\
c & -\bar a \end{pmatrix}$.
 The induced M$\ddot{\hbox{o}}$bius  transformation by
$A\sigma_0$ is
$$f: z \mapsto \frac{a \bar z+b}{c \bar z-\bar a}.$$
Suppose $c \neq 0$.  Note that $f$ has a fixed point on $\C$ if and only if $f(z)=z$ has a solution. We see that
\begin{equation}
\frac{a \bar
  z+b}{c \bar z-\bar a}=z \Leftrightarrow (z-{\frac{a}{c}})
({\bar z}-{{\frac{\bar a }{\bar c}}})=\frac{a \bar a+bc}{{\bar c}^2}
=\frac{-det\;A}{{\bar c}^2}.
\end{equation}
Thus if $det \;A<0$, then the above equation has a solution and
$f$ has a  fixed point. The fixed point set is a circle $C$ with
radius $\sqrt{\frac{-det\;A}{{\bar c}^2}}$. This implies, $f$ must
be an inversion in $C$. If $det\;A>0$, then $f$ has no fixed-point
on $\hat \C$ and $f^2=id$. By definition $f$ will be the antipodal
map of $\hat \C$. An inversion in a circle  is a $0$-rotatory
elliptic inversion of $\H^3$. It follows from the representation
of the isometries in $O(3, 1)$ that the antipodal map acts as an
$1$-rotatory elliptic inversion of $\H^3$, with rotation angle
$\pi$.

If $c=0$, then $A\sigma_0$ acts as
$f: z \mapsto a \bar zd^{-1}+bd^{-1}$. Since $f^2$ is the
identity, it follows that $f$ is an inversion in a circle of $\hat \C$, and acts as an $0$-rotatory elliptic inversion of $\H^3$.
\end{proof}
\subsubsection{Criteria for $I(\H^2)$}
\begin{corollary}
Let $f$ be an isometry of $\H^2$ induced by $A$ in $GL(2, \R)$, resp. $\R_+ \times SU(1,1)$. Let $c(A)=\frac{(trace\;A)^2}{det A}$. Then $c(A)$ is a real number.

I. Suppose $f$ is orientation-preserving.

(i) If $|c(A)-2|<2$, then $f$ is a $1$-rotatory elliptic.

(ii) If $|c(A)-2|>2$, then $f$ is a stretch.

(iii) If $|c(A)-2|=2$ and $A \neq r I$, then $f$ is a translation.

(iv) If $|c(A)-2|=2$ and $A=rI$, then $f$ is the identity.

II. Suppose $f$ is orientation-reversing. Let $f$ be induced by $A$ in $GL(2, \R)$,     resp. $\R_+ \times SU(1,1)$. Let $B=A \bar A$.

(i) If $|c(B)-2|>2$, then $f$ is a $0$-rotatory hyperbolic inversion.

(i) If $B \neq rI$ and $|c(B)-2|<2$,  then $f$ is an $1$-rotatory elliptic inversion.

(iii) If $B=rI$, then $f$ is a $0$-rotatory elliptic inversion. In this case, $f$ is an inversion in a circle of $\hat \C$.
\end{corollary}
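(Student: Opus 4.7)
The plan is to deduce this corollary directly from \thmref{ach3} by viewing $I(\H^2)$ as the subgroup of $I(\H^3)$ that fixes a totally geodesic copy of $\H^2$. The matrices in $GL(2,\R)$ and in $\R_+\times SU(1,1)$ described in the corollary are precisely the lifts of such isometries, so the classification of \thmref{ach3} applies verbatim, and the only task is to identify which of its sub-cases can actually be realized in this restricted setting.

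The first step is to check that $c(A)\in\R$ in both representations. For $A\in GL(2,\R)$ this is immediate. For $A=r\begin{pmatrix}a & \bar c \\ c & \bar a\end{pmatrix}\in\R_+\times SU(1,1)$ a direct computation gives $\mathrm{tr}\,A=r(a+\bar a)\in\R$ and $\det A=r^2(|a|^2-|c|^2)=r^2>0$, so again $c(A)\in\R$, and in fact $c(A)\ge 0$ whenever $\det A>0$. This reality immediately rules out case I(i) of \thmref{ach3} (non-real $c(A)$) and the $c(A)<0$ sub-case of I(ii) (``stretch half-turn''), and the remaining sub-cases of \thmref{ach3}~I match the statements of Part~I of the corollary. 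For Part~II, setting $B=A\bar A$, sub-cases II(i)--(iii) of \thmref{ach3} translate directly into the corresponding parts of the corollary.

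The main obstacle is Case II(iv) of \thmref{ach3}, which in the $\H^3$ setting further splits into ``inversion in a circle'' and ``antipodal map'' according to the sign of $\det(uA)$; I must show that in the $\H^2$ setting only the first alternative arises, so that $B=rI$ always gives a $0$-rotatory elliptic inversion. For $A\in GL(2,\R)$ with $\det A<0$ and $A^2=\lambda I$, the eigenvalues of $A$ are $\pm\sqrt{\lambda}$ (necessarily real, with $\lambda>0$), so $\mathrm{tr}\,A=0$ and $A$ itself already has the form $\begin{pmatrix}\alpha & \beta\\ \gamma & -\alpha\end{pmatrix}$ with entries in $\R$; taking $u=1$ in the criterion of \thmref{ach3}~II(iv) gives $\det(uA)=-\alpha^2-\beta\gamma=-\lambda<0$, selecting the circle-inversion branch. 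For $A\in\R_+\times SU(1,1)$ underlying an orientation-reversing disk isometry, $B=rI$ forces $f^2=\mathrm{id}$, so $f$ is an orientation-reversing involution of $\H^2$; such an involution must be a reflection in a hyperbolic geodesic and hence has fixed points on the ideal boundary, whereas the antipodal map of $\hat\C$ is fixed-point-free. This rules out the antipodal alternative and completes the proof.
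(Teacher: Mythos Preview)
Your overall strategy --- view $I(\H^2)$ inside $I(\H^3)$ and read off the classification from \thmref{ach3} --- is exactly the paper's approach. The paper's proof is a single sentence: every orientation-reversing isometry of $\H^2$ has exactly two fixed points on the boundary circle, and then everything follows from \thmref{ach3}. Your treatment of Part~I and your handling of the antipodal branch in II(iv) are more detailed than the paper's, and correct.

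There is, however, a genuine gap in your Part~II. You write that ``sub-cases II(i)--(iii) of \thmref{ach3} translate directly into the corresponding parts of the corollary,'' but this is not so: case~II(iii) of \thmref{ach3} is the \emph{parabolic inversion} case ($|c(B)-2|=2$ with $B\neq\lambda I$), and it has no counterpart in the corollary. The corollary's case~(iii) is $B=rI$, which corresponds to the \emph{other} half of \thmref{ach3}~II, namely II(iv). So you still owe an argument that the parabolic-inversion case cannot occur for an orientation-reversing isometry of $\H^2$. The paper's single observation handles this and the antipodal case simultaneously: for $A\in GL(2,\R)$ with $\det A<0$, the fixed-point equation on the real boundary has discriminant $(\mathrm{tr}\,A)^2-4\det A>0$, so there are always exactly two boundary fixed points; a parabolic inversion would have only one, and an antipodal map none. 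Alternatively, in your $GL(2,\R)$ setup one can compute directly that the real eigenvalues $\mu_1,\mu_2$ of $A$ satisfy $\mu_1\mu_2<0$, so $c(B)=\mu_1^2/\mu_2^2+\mu_2^2/\mu_1^2+2\ge 4$ with equality forcing $\mu_1=-\mu_2$ and hence $B=A^2=\mu_1^2 I$; this simultaneously excludes II(iii) and shows that in II(iv) one always lands in the circle-inversion branch. Either route closes the gap with minimal extra work.
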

\begin{proof}
It is easy to see that every orientation-reversing isometry of $\H^2$ has exactly two fixed points on the boundary circle. Now the corollary follows from the above theorem.
\end{proof}

\section{$z$-Classes in the group $\H^n$}
We have seen the use of ``KP-decomposition" above. Similarly there is a ``KAN-" or the ``Iwasawa decomposition" for every reductive Lie group, in particular for $I_o(\H^n)$.
 The action of $AN$ on $\H^n$ is simply transitive. So modulo a choice of a base-point, which in the upper half space model we  take to be $(0, 0, ..., 1)$,   $\H^n$ can be considered as a {\it group}. As a curiosity, we visualize the stratification of $AN$ into $z$-classes by identifying $AN$ with  $\H^n$.

Let $\x$ denote a point in $\R^{n-1}$, which we take as a part of the ideal boundary (except for $\infty$ ) of $\H^n$. The points of $\H^n$ are
$(\x, x_n)$, $x_n > 0$. Then $A = \{f_r \;|\; r > 0\}$ acts by
$$f_r: (\x, x_n) \mapsto  (r\x, rx_n), \;r > 0,$$
and $N = \{g_{\bf a} \;|\; {\bf a} \in \R^{n-1}\}$ acts by
$$g_{\bf a}: (\x, x_n) \mapsto (\x + {\bf a}, x_n).$$
 The subgroup $N$ is normal in $AN$, and $AN$ is in fact a semi-direct product of $N$ by $A$, where $A \approx \R_{>0}$ acts on $N \approx \R^{n-1}$ by
 $$  \x \mapsto r\x.$$
  A general element of $AN$ is $g_{\bf a}\circ f_r$, which acts on $\H^n$ by
  $$(\x, x_n) \mapsto  (r\x + {\bf a}, rx_n). $$
By the identification of $AN$ with $\H^n$ as indicated above, we
see that the element $g_{\bf a}\circ f_r$ corresponds to the point
$({\bf a}, r)$.

\begin{theorem}
1. The non-identity conjugacy classes in $\H^n$ are represented by  $f_r$, $r \neq 1$, and  $g_{\bf a}$, ${\bf a} \neq 0$.

i) The conjugacy class of $f_r$, $r \neq 1$, in the above identification, corresponds to the hyperplane $x_n = r$.

ii) The conjugacy class of $g_{\bf a}$, ${\bf a} \neq 0$ corresponds to the ray $(r{\bf a}, 1)$, $r > 0$, in the hyperplane $x_n = 1$.

2. There are only two non-identity z-classes, which can be
visualized as

(i) the complement of the hyperplane $x_n=1$ in $\H^n$, and

(ii) the hyperplane $x_n=1$, with puncture at $({\bf 0}, 1)$.
\end{theorem}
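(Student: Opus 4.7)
The plan is to reduce everything to the group law on $AN$. From the explicit action $g_{\bf a}\circ f_r: (\x, x_n) \mapsto (r\x + {\bf a}, rx_n)$, composition yields $(g_{\bf a}\circ f_r)(g_{\bf b}\circ f_s) = g_{{\bf a} + r{\bf b}}\circ f_{rs}$, with inverse $g_{-{\bf a}/r}\circ f_{1/r}$. A direct computation then gives the master conjugation formula
\[
(g_{\bf b}\circ f_s)(g_{\bf a}\circ f_r)(g_{\bf b}\circ f_s)^{-1} = g_{(1-r){\bf b} + s{\bf a}}\circ f_r,
\]
and everything in the theorem falls out of this single identity.

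For part 1, the $A$-component $f_r$ is a conjugacy invariant. If $r \neq 1$, then as $({\bf b}, s)$ varies, $(1-r){\bf b} + s{\bf a}$ ranges over all of $\R^{n-1}$, so the conjugacy class of $f_r$ is $\{g_{\bf c}\circ f_r : {\bf c} \in \R^{n-1}\}$, which under $g_{\bf a}\circ f_r \leftrightarrow ({\bf a}, r)$ is the hyperplane $x_n = r$. If $r = 1$ and ${\bf a} \neq {\bf 0}$, the formula collapses to $g_{s{\bf a}}$, so the class is the ray $\{(s{\bf a}, 1) : s > 0\}$ in $x_n = 1$. The identity is a singleton class, and every element of $AN$ falls into one of these three types.

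For part 2, the centralizer condition $(1-r){\bf b} + s{\bf a} = {\bf a}$ rewrites as $(1-r){\bf b} = (1-s){\bf a}$. If $r \neq 1$ this forces ${\bf b} = \frac{1-s}{1-r}{\bf a}$, so $Z(g_{\bf a}\circ f_r) = \{g_{(1-s){\bf a}/(1-r)}\circ f_s : s > 0\} = g_{{\bf a}/(1-r)}\,A\,g_{-{\bf a}/(1-r)}$, a conjugate of $A$; in particular $Z(f_r) = A$. If $r = 1$ and ${\bf a} \neq {\bf 0}$ the condition forces $s = 1$ with ${\bf b}$ free, so $Z(g_{\bf a}) = N$ for every nonzero ${\bf a}$. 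Hence every element with $r \neq 1$ has centralizer conjugate to $A$ and forms one $z$-class, while every nontrivial $g_{\bf a}$ has centralizer exactly $N$ and forms the other. Under the identification these are the complement of the hyperplane $x_n = 1$ and the hyperplane $x_n = 1$ with the point $({\bf 0}, 1)$ removed, respectively, giving the claimed two non-identity $z$-classes.

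The only real difficulty is bookkeeping: keeping the semidirect product factors straight when deriving the conjugation formula, and cleanly separating the two cases $r=1$ and $r\neq 1$. Once the formula is in hand, both the conjugacy-class description and the $z$-class count are immediate readings of it.
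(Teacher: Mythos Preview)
Your proof is correct and follows essentially the same route as the paper: both arguments rest on the explicit semidirect-product multiplication in $AN$ and the resulting conjugation formula. The only cosmetic difference is that you package everything into the single identity $(g_{\bf b}\circ f_s)(g_{\bf a}\circ f_r)(g_{\bf b}\circ f_s)^{-1} = g_{(1-r){\bf b} + s{\bf a}}\circ f_r$ and read off both the conjugacy classes and the centralizers from it, whereas the paper splits into the cases $r\neq 1$ and $r=1$ from the outset and computes each separately; the underlying computation is the same.
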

\begin{proof}
Let $f=g_{\bf a} \circ f_r$, $r \neq 1$. Let $\x_0=-(r-1)^{-1}{\bf a}$. Then
$$g_{\x_0} f g^{-1}_{\x_0}=f_r.$$
It is easy to see that $f_r$ is not conjugate to $f_s$, $r \neq s$.

Now suppose $r=1$. Then $f=g_{\bf a}$. Now for any element $h=g_{\bf b} \circ f_s$, we have
$$hfh^{-1}=g_{s \bf a}.$$
Thus $g_{\bf a}$ is conjugate to $g_{s \bf a}$, $s>0$, and if ${\bf b} \neq t{\bf a}$ for some $t>0$, then $g_{\bf b}$ is not conjugate to $g_{\bf a}$.

Hence the conjugacy classes in $AN$ are represented by $f_r$, for $r>0$, and $g_{\bf a}$, where $\bf a$ is unique up to a multiplication by $r>0$. In the above identification of $AN$ with $\H^n$, the conjugacy class of $f_r$ corresponds to
$$\{({\bf a}, r)\;|\;{\bf a} \in \R^{n-1}\},$$
and the conjugacy class of $g_{\bf a}$ corresponds to
$$\{(\x, 1)\;|\;\x=r {\bf a} \hbox{ for } r>0\}.$$

For ${\bf a} \neq 0$,  the group $Z(g_{\bf a}) = N$ is normal. For $r, s \neq 1$, $Z(f_r) = Z(f_s) = A$. Since for $r \neq 1$, $g_{\bf a}\circ f_r$ is conjugate to $f_r$, it follows that for all $r \neq 1$, all $Z(f_r)$, and $Z(g_{\bf a}\circ f_r)$  form a single conjugacy class of subgroups. Correspondingly there are exactly two non-identity $z$-classes in $\H^n$. The $z$-class corresponding to $f_r$ is given by
$$\bigcup\{x_n=s\;|\; s \neq 1\}.$$
The $z$-class of $g_{\bf a}$ is given by
$$\{(\x, 1)\}|\; \x \neq {\bf 0}\}.$$
\end{proof}

%Correspondingly there are only two non-identity z-classes, which can be visualized as 1) H^n - \{ the plane x_n = 1\}, and  2) \{ the plane x_n = 1\} - \{\vec 0, 1)\}

\end{appendix}


\begin{thebibliography}{99}
\bibitem{ahlfors}Ahlfors L. V., {\it M$\ddot o$bius transformations and
  Clifford numbers}, Differential Geometry and Complex Analysis
  (Springer, 1985),  65--73.

\bibitem{ahlfors2} Ahlfors L. V., {\it On the fixed points of
 M$\ddot o$bius transformations in $\R^n$},
Ann. Acad. Sci. Fenn. Ser. A I Math.  10  (1985), 15--27.

\bibitem{beardon}Beardon A. F., {\it The Geometry of Discrete Groups},
Graduate Texts in Mathematics 91, Springer- Verlag, Berlin, 1983.

\bibitem{CG}Chen S. S. and  Greenberg  Leon, {\it Hyperbolic Spaces},   Contributions to analysis (a collection of papers dedicated to Lipman Bers),
pp. 49--87. Academic Press, New York, 1974.

\bibitem{cw} Cao C. and Waterman P. L., { \it  Conjugacy invariants of
M$\ddot o$bius groups},  Quasiconformal Mappings and Analysis (Springer,
  1998), 109--139.


 \bibitem{G}  Greenberg Leon, {\it Discrete subgroups of the Lorentz groups}, Math. Scand. 10 (1962), 85-107.

\bibitem{hum} Humphreys James E., {\it Linear Algebraic Groups,} Graduate Texts in Mathematics, No. 21. Springer-Verlag, New York-Heidelberg, 1975.


\bibitem{K}   Kulkarni Ravi S., {\it Dynamical types and conjugacy classes
of centralizers in groups}, J. Ramanujan Math. Soc. 22, No.1
(2007), 35-56.

\bibitem{K2}Kulkarni Ravi S., {\it Conjugacy classes in $M(n)$}.
Conformal geometry (Bonn, 1985/1986), 41--64, Aspects Math., E12, Vieweg,
Braunschweig, 1988.

\bibitem{mcd}Macdonald, I.G. {\it Symmetric functions and Hall polynomials},
Oxford University Press, 1995.

\bibitem{ratcliffe}John G. Ratcliffe, {\it Foundation of Hyperbolic Manifolds},
Graduate Texts in Mathematics 149, Springer-Verlag, 1994.


\bibitem{jkv}Verma Jugal K., {\it Invariants of finite groups}, preprint,
 \texttt{http://www.math.iitb.ac.in/ \~ {}jkv/}, Pune (2000).

\bibitem{wada}Wada Masaki, {\it Conjugacy invariants of M$\ddot o$bius transformations},
Complex Variables Theory Appl. 15 (1990), no. 2, 125--133.

\bibitem{waterman}Waterman P. L., {\it  M$\ddot o$bius groups in several
  dimensions}, Adv. Math. 101 (1993), 87--113.
\end{thebibliography}
\end{document}